\newcommand{\linfty}{\ensuremath{\ell^\infty}}
\newcommand{\Linfty}{\ensuremath{L^\infty}}
\newcommand{\cat}[1]{\ensuremath{\mathbf{#1}}}
\newcommand{\cCstar}{\cat{cCstar}}
\newcommand{\Cstar}{\cat{Cstar}}
\newcommand{\Wstar}{\cat{Wstar}}
\newcommand{\AWstar}{\cat{AWstar}}
\newcommand{\proCstar}{\cat{proCstar}}
\newcommand{\Alg}{\cat{A}}
\newcommand{\TAlg}{\cat{TAlg}}
\newcommand{\Z}{\mathbb{Z}}
\newcommand{\C}{\mathbb{C}}
\newcommand{\T}{\mathbb{T}}
\newcommand{\N}{\mathbb{N}}
\newcommand{\M}{\mathbb{M}}
\newcommand{\cC}{\mathcal{C}}
\newcommand{\F}{\mathcal{F}}
\newcommand{\inprod}[2]{\ensuremath{\langle #1 , #2 \rangle}}
\DeclareMathOperator{\colim}{colim}
\DeclareMathOperator{\Spec}{Spec}
\newcommand{\ev}{\mathrm{ev}}
\newcommand{\fhat}{\widehat{f}}
\theoremstyle{theorem}
\newtheorem{theorem}{Theorem}
\newtheorem{lemma}[theorem]{Lemma}
\newtheorem{corollary}[theorem]{Corollary}
\newtheorem{proposition}[theorem]{Proposition}
\newtheorem{question}[theorem]{Question}
\theoremstyle{definition}
\newtheorem{definition}[theorem]{Definition}
\newtheorem{remark}[theorem]{Remark}
\newtheorem{example}[theorem]{Example}
\numberwithin{theorem}{section}
\numberwithin{equation}{section}
\begin{document}

\title{Discretization of C*-algebras}

\author{Chris Heunen}
\address{Department of Computer Science, University of Oxford, Oxford OX1 3QD, UK}
\email{heunen@cs.ox.ac.uk}
\author{Manuel L. Reyes}
\address{Department of Mathematics, Bowdoin College\\
Brunswick, ME 04011--8486, USA}
\email{reyes@bowdoin.edu}

%\thanks{
%  C.~Heunen was supported by EPSRC Fellowship EP/L002388/1. 
%  M.~Reyes was supported by NSF grant DMS-1407152. 
%  We thank the anonymous referee for an insightful report that sharpened the statement of
%  Theorem~\ref{thm:nogo}, informed us of Example~\ref{ex:circle}, and inspired us to significantly
%  expand and rewrite this paper. 
%}

\date{July 12, 2016}
\subjclass[2010]{Primary: 
  46L85, %Noncommutative topology
  46M15; %Categories, functors
  Secondary:
  46L30%States
}
\keywords{
  Noncommutative topology, noncommutative set, function algebra, discrete space, 
  profinite completion, pure state, diffuse measure, spectrum obstruction}
\maketitle
\begin{abstract}
  We investigate how a C*-algebra could consist of functions on a noncommutative set:
  a \emph{discretization} of a C*-algebra $A$ is a $*$-homomorphism $A \to M$ that factors 
  through the canonical inclusion $C(X) \subseteq \linfty(X)$ when restricted to a commutative 
  C*-subalgebra.
  Any C*-algebra admits an injective but nonfunctorial discretization, as well as a possibly noninjective
  functorial discretization, where $M$ is a C*-algebra.
  Any subhomogenous C*-algebra admits an injective functorial discretization, where $M$ is a 
  W*-algebra.
  However, any functorial discretization, where $M$ is an AW*-algebra, must trivialize $A = B(H)$ for
  any infinite-dimensional Hilbert space $H$.
\end{abstract}

\section{Introduction}

In operator algebra it is common practice to regard C*-algebras as noncommutative 
analogues of topological spaces, and to regard W*-algebras as noncommutative
analogues of measurable spaces. 
What would it mean to make precise how a C*-algebra is a `noncommutative ring of
continuous functions'? 
Several natural approaches to this question cannot faithfully represent examples as simple as matrix algebras $\M_n(\C)$~\cite{reyes:obstructing, bergheunen:extending, reyes:sheaves, benzvimareyes:kochenspecker}.
Such obstructions suggest more carefully considering what `noncommutative sets' in the foundations of noncommutative geometry should be, before attempting to topologize them.

This article explores the idea of embedding the C*-algebra in an appropriate noncommutative algebra of `bounded functions on the noncommutative set underlying its spectrum', just like any topological space embeds in a discrete one.
More precisely, consider the case of a commutative C*-algebra $A$.
A representation of $A$ as operating on a Hilbert space $H$ is equivalent to a $*$-homomorphism $A \to B(H)$.
Similarly, representing $A$ as continuous complex-valued functions on a compact Hausdorff space $X$ can equivalently be viewed as a $*$-homomorphism $A \to \linfty(X)$ to the algebra of bounded functions on the set $X$. 
More generally, representating $A$ as (discrete) functions on a set $X$ can equivalently be viewed as a $*$-homomorphism to the algebra $\C^X$ of all functions on $X$.

In the spirit of noncommutative geometry, we thus seek a category $\Alg$ of 
$*$-algebras to play the role of the dual to the category of `noncommutative sets'. 
This category should contain the commutative algebras $\linfty(X)$ (or $\C^X$) as a full subcategory, dual to the category of sets.
In keeping with the programme of taking commutative subalgebras seriously~\cite{heunen:faces,
reyes:obstructing,bergheunen:extending,reyes:sheaves,vdbergheunen:colim, 
heunenlandsmanspitters:topos,hamhalter:ordered,heunenreyes:activelattice,hamhalter:dye},
we posit that a representation of a C*-algebra as an algebra of functions on a noncommutative set should be an algebra homomorphism $\phi \colon A \to M$ for some $M$ in $\Alg$, whose restriction to every commutative C*-subalgebra $C \simeq C(X)$ of $A$ factors through the natural inclusion 
$C(X) \subseteq \linfty(X)$ via a morphism $\linfty(X) \to M$ in $\Alg$. We call such a map $\phi$ a \emph{discretization} of $A$.
\[\begin{tikzpicture}[xscale=3,yscale=1]
  \node (tl) at (0,1) {$A$};
  \node (tr) at (1,1) {$M$};
  \node (bl) at (0,0) {$C(X)$};
  \node (br) at (1,0) {$\linfty(X)$};
  \draw[->] (tl) to node[above] {$\phi$} (tr);
  \draw[right hook->] (bl) to (br);
  \draw[->] (bl) to (tl);
  \draw[->,dashed] (br) to (tr);
\end{tikzpicture}\]
Section~\ref{sec:discretization} makes this definition precise, relative to a parameterizing category $\Alg$ that can then remain unspecified. This approach to terminology gives most flexibility in investigating the open problem of finding a suitable noncommutative extension of the functor $C(X) \mapsto \linfty(X)$ before us. 
We show that every C*-algebra admits a discretization into a C*-algebra $M$ that is injective but nonfunctorial. 
We also show that there is a universal candidate for a functorial discretization into the category of C*-algebras, but it remains open whether this functorial discretization is injective for every C*-algebra.

In Section~\ref{sec:profinite} we show that a sizeable class of C*-algebras that are 
`close to being commutative' does indeed have injective functorial discretizations, namely the \emph{subhomogeneous} algebras: subalgebras of $\M_n(C)$ for some commutative C*-algebra $C$. The discretization is achieved by profinite completion,
suggesting that profinite completion for subhomogeneous algebras is a noncommutative substitute for the `underlying set functor' that sends a compact Hausdorff space to its underlying discrete space.

On the other hand, in Section~\ref{sec:nogo} we show that no subcategory of W*-algebras, or even AW*-algebras, can be dual to noncommutative sets in the sense of injectively discretizing every C*-algebra.
In particular, every functor from C*-algebras to AW*-algebras taking each C*-algebra to a discretization must trivialize $A = B(H)$ for any infinite-dimensional Hilbert space $H$.
A number of related examples and obstructions are discussed, including separable algebras $A$ for which the same trivialization occurs.
Viewing $*$-homomorphisms out of a C*-algebra as representing it by functions on a noncommutative set dates back at least to Akemann~\cite{akemann:stoneweierstrass} and Giles and Kummer~\cite{gileskummer:topology}, who took the representation to be the canonical homomorphism $A \to A^{**}$ into the bidual.
They noted~\cite[p.~10]{akemann:gelfand} that their theory was not functorial. 
Our obstructions amplify this observation by suggesting that W*-algebras indeed cannot play the role of `noncommutative $\linfty(X)$-algebras' for C*-algebras as large as $B(H)$.

The article concludes with a discussion in Section~\ref{sec:conclusion} of the implications of our obstructions, with an eye toward future work on the problem of finding injective functorial discretizations of all C*-algebras.

\section{Discretization}
\label{sec:discretization}

We assume throughout this article that all rings, algebras, and subalgebras are unital, and that all homomorphisms preserve units. Write $\Spec(C)$ for the Gelfand spectrum of a commutative C*-algebra $C$.
Write $\Cstar$ for the category of C*-algebras with $*$-homomorphisms and $\Wstar$ for the subcategory of W*-algebras with normal $*$-homomorphisms.

Recall that a \emph{pro-C*-algebra}~\cite{phillips:inverse, phillips:applications} is a topological $*$-algebra that is a directed (or ``inverse'') limit in the category of topological $*$-algebras of a system of C*-algebras. Pro-C*-algebras with continuous $*$-homomorphisms form a category $\proCstar$. The algebra $\C^X$ of all complex-valued functions on a set $X$ equipped with its topology of pointwise convergence is a pro-C*-algebra, as it is the directed limit of the finite-dimensional C*-algebras $\C^S$ for all finite subsets $S \subseteq X$.%, and therefore is a pro-C*-algebra.

\begin{lemma}\label{lem:equivalence}
  The functors $X \mapsto \linfty(X)$ and $X \mapsto \C^X$ are contravariant equivalences between the category of sets and full subcategories of $\Wstar$ and $\proCstar$. 
\end{lemma}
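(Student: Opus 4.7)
A set map $f \colon X \to Y$ induces the pullback $f^*(g) = g \circ f$, which is a unital $*$-homomorphism $\linfty(Y) \to \linfty(X)$ (respectively $\C^Y \to \C^X$). I would first check this is a morphism in the appropriate category: it is normal, since pointwise suprema are preserved coordinate-by-coordinate, and continuous for the pointwise convergence topology, since the defining seminorms $p_x(g) = |g(x)|$ satisfy $p_x \circ f^* = p_{f(x)}$. Defining the target subcategories of $\Wstar$ and $\proCstar$ to be the full subcategories on the images makes essential surjectivity automatic, so the lemma reduces to showing that both functors are fully faithful. Faithfulness is immediate: if $f(x) \neq f'(x)$, then $f^*(\delta_{f(x)})$ and $(f')^*(\delta_{f(x)})$ differ at the coordinate $x$, where $\delta_y$ denotes the characteristic function of $\{y\}$.

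The heart of the argument is fullness. Given a morphism $\phi \colon \linfty(Y) \to \linfty(X)$ in $\Wstar$ (respectively $\C^Y \to \C^X$ in $\proCstar$), each composition $\ev_x \circ \phi$ is a normal (respectively continuous) unital $*$-character of $\linfty(Y)$ (respectively $\C^Y$). If every such character is an evaluation $\ev_y$ at a unique $y \in Y$, then the prescription $f(x) := y$ defines a map of sets, and a coordinatewise check yields $\phi = f^*$.

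The main technical step is therefore to classify the relevant characters. The atomic projections $\{\delta_y\}_{y \in Y}$ are pairwise orthogonal, so any $*$-character $\chi$ assigns them values in $\{0,1\}$ with at most one nonzero. For a normal character of $\linfty(Y)$, the bounded increasing net $\sum_{y \in S}\delta_y \uparrow 1$ indexed by finite $S \subseteq Y$ forces exactly one $\chi(\delta_{y_0}) = 1$, and then $\chi(g) = \chi(g\delta_{y_0}) = g(y_0)\chi(\delta_{y_0}) = g(y_0)$ by multiplicativity. For a continuous character of $\C^Y$, a basic-neighbourhood argument in the pointwise topology produces an estimate $|\chi(g)| \leq C \max_i |g(y_i)|$ for some finite $\{y_1,\dots,y_n\} \subseteq Y$, so $\chi$ factors through the restriction $\C^Y \twoheadrightarrow \C^{\{y_1,\dots,y_n\}}$ and hence equals $\ev_{y_j}$ for some $j$. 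I expect the normality step to be the main subtlety: dropping it allows characters corresponding to free ultrafilters (i.e., points of $\beta Y \setminus Y$), which have no preimage in $Y$ and would break fullness.
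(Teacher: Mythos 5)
Your proposal is correct and follows essentially the same route as the paper: both reduce full faithfulness to the classification of normal characters of $\linfty(Y)$ and continuous characters of $\C^Y$ as point evaluations, a step the paper carries out via kernels generated by characteristic functions and you carry out via the values on the atomic projections $\delta_y$ together with normality (resp.\ continuity). Your explicit composition with $\ev_x$ to recover the set map is exactly the ``purely formal'' part that the paper delegates to a reference.
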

\begin{proof}
  The proof for the functor $\linfty$ can be found in~\cite[Section~6.1]{weaver:quantization}. We sketch an argument that covers both functors.

  It is rather clear that each of the above assignments forms a contravariant functor into the specified
  category. It only remains to show that each is naturally bijective on Hom-sets. 
  Fix $x \in X$.
  Let $\ev_x \colon \C^X \to \C$ denote the continuous $*$-homomorphism given by evaluation at
  $x$, whose restriction to $\linfty(X)$ is normal.
  The maps $X \to \proCstar(\C^X,\C)$ and $X \to \Wstar(\linfty(X),\C)$, given in each case 
  by $x \mapsto \ev_x$, are both bijections; this follows by verifying that the kernel of either kind of
  morphism $\C^X \to \C$ or $\linfty(X) \to \C$ is generated as an ideal by a characteristic function 
  $\chi_S$, which entails that $S = X \setminus \{x\}$ for some $x \in X$.

  Now the argument that the functors in question are bijective on Hom-sets is purely formal, and
  can be proved by essentially the same argument as the one given in the algebraic context
  in~\cite[Theorem~4.7]{iovanovmesyanreyes:diagonal}.
\end{proof}

The previous lemma leads naturally to the following notion, in keeping with the programme 
of taking commutative subalgebras seriously. 
As mentioned in the introduction, the definition is made relative to a category $\Alg$ of complex algebras 
that is a candidate to contain `algebras of bounded functions on noncommutative sets.'

\begin{definition}\label{def:discretization}
  Let $\Alg$ denote a category of $\C$-algebras containing the algebras $\linfty(X)$ for any set $X$ with their normal *-homomorphisms. Given a C*-algebra $A$, a \emph{(bounded) $\Alg$-discretization} is a homomorphism $\phi \colon A \to M$ whose restriction to each commutative C*-subalgebra $C$ of $A$ factors through the natural inclusion $C \to \linfty(\Spec(C))$ via a morphism $\phi_C \colon \linfty(\Spec(C)) \dashrightarrow M$ in $\Alg$.
  \[\begin{tikzpicture}[xscale=3,yscale=1]
    \node (tl) at (0,1) {$A$};
    \node (tr) at (1,1) {$M$};
    \node (bl) at (0,0) {$C$};
    \node (br) at (1,0) {$\linfty(\Spec(C))$};
    \draw[->] (tl) to node[above] {$\phi$} (tr);
    \draw[right hook->] (bl) to (br);
    \draw[left hook->] (bl) to (tl);
    \draw[->,dashed] (br) to node[right] {$\phi_C$} (tr); 
  \end{tikzpicture}\]
  We call a discretization $\phi$ \emph{faithful} when it is injective and all $\phi_C$ can be chosen injective.
  We call $\phi$ \emph{compatible} if the morphisms $\phi_C$ can be chosen such that $\phi_C$ factors through $\phi_D$ via the induced morphism $\linfty(\Spec(C)) \to \linfty(\Spec(D))$ for commutative C*-subalgebras $C \subseteq D \subseteq A$.
\end{definition}

When $\Alg$ is $\Cstar$ or $\Wstar$ above, we will speak of C*- or W*-discreti\-zations instead of $\Alg$-discretizations. 

\begin{proposition}\label{prop:pushoutdiscretization}
  Every C*-algebra has a faithful C*-discretization.
\end{proposition}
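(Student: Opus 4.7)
The plan is to construct $M$ as a pushout in $\Cstar$. Concretely, let $\mathcal{D}$ be the diagram in $\Cstar$ whose objects are $A$, each commutative C*-subalgebra $C \subseteq A$, and each $\ell^\infty(\Spec(C))$, with morphisms the inclusions $C \hookrightarrow A$ and $C \hookrightarrow \ell^\infty(\Spec(C))$. Cocompleteness of $\Cstar$ furnishes the colimit $M := \colim \mathcal{D}$, whose universal cocone supplies a $*$-homomorphism $\phi \colon A \to M$ and, for each $C$, a $*$-homomorphism $\phi_C \colon \ell^\infty(\Spec(C)) \to M$ with $\phi \circ j_C = \phi_C \circ i_C$. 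So $\phi$ is a C*-discretization by construction.

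For faithfulness I would invoke the universal property of $M$: it suffices to exhibit one ``witness cocone'' $(N, \psi, \{\psi_C\})$ from $\mathcal{D}$ in which $\psi \colon A \to N$ is injective and each $\psi_C \colon \ell^\infty(\Spec(C)) \to N$ is injective. The comparison map $M \to N$ induced by universality then factors $\phi$ and each $\phi_C$, pushing injectivity back to them, which is exactly what is needed.

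To build the witness I would use pure-state extensions. For every pair $(C, x)$ with $C \subseteq A$ commutative and $x \in \Spec(C)$, Hahn--Banach together with Krein--Milman produces a pure state $\sigma_{C,x}$ of $A$ extending $x$; in the GNS representation $\pi_{C,x} \colon A \to B(H_{C,x})$, the cyclic vector $\Omega_{C,x}$ is an eigenvector of $\pi_{C,x}(c)$ with eigenvalue $x(c)$ for every $c \in C$. Summing these over all pairs yields a faithful representation of $A$, while the distinguished eigenlines $\C\,\Omega_{C,x}$ indexed by $x \in \Spec(C)$ assemble (for each fixed $C$) into a copy of $\ell^2(\Spec(C))$ on which $C$ acts by multiplication, furnishing a candidate faithful representation of $\ell^\infty(\Spec(C))$ on that subspace.

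The hardest step is coherence. A single ambient $N$ must support an injective representation of $A$ together with, for every commutative $C \subseteq A$, an injective $*$-homomorphism out of $\ell^\infty(\Spec(C))$ that agrees with $\psi|_C$; since distinct commutative subalgebras of $A$ share many elements and the full restriction $\pi|_C$ of any faithful representation of $A$ almost never has atomic spectral measure, the $\psi_C$'s cannot in general be obtained by simply extending $\pi|_C$ on the same Hilbert space. The amalgamation therefore has to be executed at the C*-algebraic level inside the pushout itself, with the eigenline data serving only to certify that the universal C*-seminorm on the algebraic amalgamation restricts to the given norm on $A$ and on each $\ell^\infty(\Spec(C))$. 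Verifying this norm estimate --- i.e. that there really is \emph{some} compatible representation detecting each nonzero element of $A$ and of each $\ell^\infty(\Spec(C))$ --- is the technical heart of the argument.
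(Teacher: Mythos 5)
Your construction of $M$ as the colimit of the diagram containing $A$, all commutative C*-subalgebras $C$, and all $\ell^\infty(\Spec(C))$ is the same object the paper builds, and the logical skeleton of your faithfulness argument---exhibit any cocone with injective legs and pull injectivity back along the comparison map---is valid in principle. The gap is that you never actually produce such a cocone, and the route you sketch cannot work as stated. As you yourself concede, the GNS eigenline data for a fixed $C$ only yields an action of $\ell^\infty(\Spec(C))$ on the span of the cyclic vectors $\Omega_{C,x}$, whereas compatibility with $\psi|_C$ would require extending the action of $C$ on all of $\bigoplus_x H_{C,x}$ to $\ell^\infty(\Spec(C))$; since the spectral measure of $C$ on the non-cyclic part of these spaces need not be atomic, no such extension exists in general. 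You then defer the real issue---that the universal C*-seminorm on the algebraic amalgamation does not collapse $A$ or any $\ell^\infty(\Spec(C))$---to ``the technical heart'' without proving or citing it. That statement is precisely the content of the proposition's difficulty, and it is not elementary; indeed, the obstruction results later in the paper show how delicate the interaction between $A$ and the adjoined projections of $\ell^\infty(\Spec(C))$ can be.

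The paper closes this gap by structure rather than by representation theory: it organizes $M$ as the directed colimit, over finite families $S = \{C_1, \dots, C_n\}$ of commutative subalgebras, of the iterated amalgamated free products $(\cdots(A *_{C_1} L(C_1))\cdots) *_{C_n} L(C_n)$ where $L(C) = \ell^\infty(\Spec(C))$; it then invokes the theorem of Blackadar (or Pedersen) that the canonical maps of the factors into a unital amalgamated free product of C*-algebras are injective, and finally uses that the structure maps into a directed colimit of injections in $\Cstar$ remain injective (Takeda). To repair your argument, replace the hand-built witness cocone by a citation of that free-product embedding theorem applied one subalgebra at a time, together with the finite-to-directed decomposition of the colimit.
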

\begin{proof}
  Write $L$ for the functor $C \mapsto \linfty(\Spec(C))$.
  Given a finite family $S = \{C_1, \ldots, C_n\}$ of commutative C*-subalgebras of $A$, write $A_S$ for the colimit in $\cat{Cstar}$ of the diagram whose objects are $A$, the $C_i$, and the $L(C_i)$, along with the inclusions of each $C_i$ into both $A$ and $L(C_i)$. This can be constructed up to isomorphism as an iterated amalgamated free product: 
  \[
    A_S \simeq ( \cdots ((A *_{C_1} L(C_1)) *_{C_2} L(C_2)) \cdots )*_{C_n} L(C_n).
  \]
  Thus the natural maps from $A$ and the $L(C_i)$ into $A_S$ are all embeddings; see~\cite[Theorem~3.1]{blackadar:pushout} or~\cite[Theorem~4.2]{pedersen:pullbacks}.

  The finite families $S$ of commutative C*-subalgebras of $A$ form a directed set under inclusion. Consider the directed colimit $M = \colim_S A_S$.  By construction the mediating map $\phi \colon A \to M$ is a C*-discretization. For finite subfamilies $S \subseteq T$ of commutative C*-subalgebras of $A$, the induced map $A_S \to A_T$ is injective because $A_T$ is formed from $A_S$ by iterated pushouts.
  Thus the natural maps $A_S \to M$ are injective~\cite[Theorem~1]{takeda:colimit}, from which it follows that $\phi$ is faithful.
\end{proof}

The discretization $\phi \colon A \to M$ constructed in the proof above is not compatible: for commutative C*-subalgebras $C \subsetneq D \subseteq A$, the algebra $M$ is obtained by gluing together distinct copies of $L(C)$ and $L(D)$ without regard to the 
natural inclusion $L(C) \to L(D)$. In Theorem~\ref{thm:universalfunctor} below we modify the construction to ensure compatibility, with the caveat that we no longer know that the discretization is even injective. 
This universally constructed C*-discretization will in fact satisfy the following natural condition.

\begin{definition}
  Let $\Alg$ be a category as in Definition~\ref{def:discretization}.
  A \emph{functorial $\Alg$-discretization} is a functor $F \colon \Cstar \to \Alg$ together with natural homomorphisms $\eta_A \colon A \to F(A)$ such that $\eta_C$ for each commutative C*-algebra $C$ turns into the natural inclusion $C \to \linfty(\Spec(C))$ by a natural isomorphism $F(C) \simeq \linfty(\Spec(C))$.
\end{definition}

A functorial discretization automatically gives compatible discretizations $A \to F(A)$ for every C*-algebra $A$:
writing $i_C \colon C \to A$ for the inclusion of a commutative C*-subalgebra gives the following commutative diagram.
\[\begin{tikzpicture}[xscale=3,yscale=1.5]
  \node (tl) at (0,1) {$A$};
  \node (tr) at (1,1) {$F(A)$};
  \node (bl) at (0,0) {$C$};
  \node (br) at (1,0) {$F(C)$ \rlap{$\simeq\linfty(\Spec(C))$}};
  \draw[->] (tl) to node[above] {$\eta_A$} (tr);
  \draw[right hook->] (bl) to node[above] {$\eta_C$} (br);
  \draw[left hook->] (bl) to node[left] {$i_C$} (tl);
  \draw[->,dashed] (br) to node[right] {$F(i_C)$} (tr); 
\end{tikzpicture}\]
Compatibility follows by applying $F$ to successive inclusions $C \subseteq D \subseteq A$.

Write $\cCstar$ for the full subcategory of $\Cstar$ of commutative C*-algebras.
Write $\cC(A)$ for the small subcategory of $\cCstar$ consisting of the commutative C*-subalgebras of a C*-algebra $A$ with their inclusion morphisms; we also view this as a partially ordered set.

\begin{theorem}\label{thm:universalfunctor}
  The functor $F \colon \Cstar \to \Cstar$ given by 
  \[
    F(A) = \colim\limits_{C \in \cC(A)} A *_C \linfty(\Spec(C))
  \]
  equipped with the naturally induced $*$-homomorphisms $\eta_A \colon A \to F(A)$
  is a functorial C*-discretization.
  For each C*-algebra $A$, the C*-discretization $A \to F(A)$ is universal among all compatible C*-discretizations of $A$. 
  Thus $F$ is universal among all functorial C*-discretizations.
\end{theorem}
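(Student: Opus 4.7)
The plan is to construct $F$ on objects via the stated colimit and on morphisms via the pushout universal property, then verify that $(F,\eta)$ is a functorial C*-discretization, and finally deduce both universality statements from the universal properties of the colimit and the pushouts.

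For well-definedness on objects, each pushout $A *_C \linfty(\Spec(C))$ exists in $\Cstar$ by the results of Blackadar and Pedersen cited in Proposition~\ref{prop:pushoutdiscretization}. For $C \subseteq D$ in $\cC(A)$, the maps $A \to A *_D \linfty(\Spec(D))$ and $\linfty(\Spec(C)) \to \linfty(\Spec(D)) \to A *_D \linfty(\Spec(D))$ agree on $C$, so the pushout property produces transition morphisms making $C \mapsto A *_C \linfty(\Spec(C))$ a diagram in $\Cstar$ indexed by the poset $\cC(A)$. Since $\Cstar$ admits all small colimits, $F(A)$ exists, and $\eta_A$ is induced by the compatible family $A \to A *_C \linfty(\Spec(C))$. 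For a morphism $f \colon A \to B$, one has $f(C) \in \cC(B)$ for each $C \in \cC(A)$ (the image of a $*$-homomorphism between C*-algebras is closed), so the pushout property yields canonical maps $A *_C \linfty(\Spec(C)) \to B *_{f(C)} \linfty(\Spec(f(C)))$ built from $f$ and from $\linfty(\Spec(C)) \to \linfty(\Spec(f(C)))$ dual to $\Spec(f(C)) \hookrightarrow \Spec(C)$. Composing with the cocone of $F(B)$ and checking coherence across the transitions for $A$ yields $F(f)$, and colimit uniqueness gives functoriality.

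The pair $(F,\eta)$ is a functorial C*-discretization: when $A$ itself is commutative, $\cC(A)$ has maximum element $A$, so the colimit collapses to $A *_A \linfty(\Spec(A)) = \linfty(\Spec(A))$ and $\eta_A$ becomes the Gelfand embedding, with naturality of $F(A) \simeq \linfty(\Spec(A))$ following from the explicit description of $F$ on morphisms. For general $A$, the factorizations $\phi_C \colon \linfty(\Spec(C)) \to F(A)$ are simply the colimit coprojections, so $\eta_A$ is a C*-discretization. To prove universality among compatible C*-discretizations, given $\phi \colon A \to M$ with chosen factorizations $\phi_C$, the pushout property yields for each $C$ a unique morphism $A *_C \linfty(\Spec(C)) \to M$ recovering $\phi$ and $\phi_C$; the compatibility condition $\phi_C = \phi_D \circ (\linfty(\Spec(C)) \to \linfty(\Spec(D)))$ for $C \subseteq D$ translates precisely to these morphisms respecting the transitions of the diagram, so they descend to a unique morphism $F(A) \to M$ under $\eta_A$. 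Finally, any functorial C*-discretization $(G,\mu)$ yields compatible discretizations by the remark preceding the theorem, so the universal property supplies a unique map $F(A) \to G(A)$ under $\eta_A$ for each $A$; uniqueness then forces naturality in $A$, giving the required natural transformation $F \Rightarrow G$.

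The main obstacle is step by step verifying that the morphisms $A *_C \linfty(\Spec(C)) \to B *_{f(C)} \linfty(\Spec(f(C))) \to F(B)$ defining $F(f)$ are compatible with the transitions in the colimit $F(A)$, so that they descend coherently; this is a diagram chase requiring careful interplay of the contravariant $\linfty \circ \Spec$ functoriality with the covariant $*$-algebra maps. Once this is in hand, $F(gf) = F(g)F(f)$, $F(\id) = \id$, and both universal properties follow formally from the uniqueness clauses of pushouts and colimits.
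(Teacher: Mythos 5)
Your proposal is correct and follows essentially the same route as the paper's proof: build the diagram $C \mapsto A *_C \linfty(\Spec(C))$ over the poset $\cC(A)$, use the functoriality of $A \mapsto \cC(A)$ (via $C \mapsto f(C)$) and of $\linfty \circ \Spec$ to get $F$ on morphisms, observe that commutativity of $A$ collapses the colimit to $\linfty(\Spec(A))$, and derive both universality claims by noting that a compatible discretization is exactly a cocone over the pushout diagram. The only difference is that you spell out more of the diagram-chasing details that the paper compresses into ``functorial by construction.''
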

\begin{proof}
  We follow the idea of~\cite[Theorem~2.15]{reyes:obstructing} but with arrows reversed.
  Write $L = \linfty \circ \Spec \colon \cCstar \to \Cstar$. 
  The assignment $A \mapsto \cC(A)$ is a functor to the category of small categories.
  Given a C*-algebra $A$, the assignment $C \mapsto A *_C L(C)$ is functorial $\cC(A) \to \Cstar$. 
  So $F(A) = \colim_{C \in \cC(A)} A *_C L(C)$ defines a functor $F \colon \Cstar \to \Cstar$. 
  Moreover, the induced $*$-homo\-morphisms $\eta_A \colon A \to F(A)$ are natural by construction. 
  Finally, if $A$ is commutative so that $A \in \cC(A)$, then  one naturally has an isomorphism 
  $F(A) \simeq \linfty(\Spec(A))$ that turns 
  $\eta_A$ into the inclusion $A \to \linfty(\Spec(A))$.
  Thus $F$ is a functorial C*-discretization.

  To verify universality of $\eta_A$, fix a compatible C*-discretization
  $\phi \colon A \to M$. Each $C \in \cC(A)$ then makes the following outer square commute.
  \[\begin{tikzpicture}[xscale=4,yscale=1.5]
    \node (tl) at (0,1) {$A$};
    \node (tr) at (1,1) {$M$};
    \node (bl) at (0,0) {$C$};
    \node (br) at (1,0) {$L(C)$};
    \node (m) at (.5,.5) {$A *_C L(C)$};
    \draw[->] (tl) to node[above] {$\phi$} (tr);
    \draw[->] (bl) to (br);
    \draw[->] (bl) to (tl);
    \draw[->] (br) to node[right] {$\phi_C$} (tr); 
    \draw[->] (tl) to (m);
    \draw[->] (br) to (m);
    \draw[->,dashed] (m) to (tr);
  \end{tikzpicture}\]
  The morphisms $\phi$ and $\phi_C$ factor uniquely through the pushout $A *_C L(C)$.
  Compatibility of the $\phi_C$ means that these uniquely determined morphisms form a 
  cocone from the diagram of the $A *_C L(C)$ to $M$. Thus we obtain a $*$-homomorphism 
  $F(A) = \colim_{C \in \cC(A)} A *_C L(C) \to M$ through which $\phi$ factors uniquely,
  as desired.

  Finally, if $(F',\eta')$ is any functorial C*-discretization, then by the local universality of the previous paragraph the natural morphisms $\eta'_A \colon A \to F'(A)$ factor uniquely through $\eta_A \colon A \to F(A)$, from which it readily follows that $F'$ factors through a unique natural transformation $F \Rightarrow F'$ whose composite with $\eta$ is $\eta'$.
\end{proof}

Whereas the `incompatible' discretization of Proposition~\ref{prop:pushoutdiscretization} is faithful, it is not clear whether the natural C*-discretizations $A \to F(A)$ of the last theorem are faithful or even injective. Abstract nonsense alone does not answer this question.

\begin{question}\label{question}
  Is the universal functorial C*-discretization $\eta_A \colon A \to F(A)$ of Theorem~\ref{thm:universalfunctor} injective or faithful for every C*-algebra $A$? Equivalently, does every C*-algebra have an injective or faithful compatible C*-discretization?
\end{question}

\begin{remark}
  The definitions and results above carefully used the Gelfand spectrum $\Spec(C)$ of a commutative C*-algebra $C$. 
  Henceforth we loosen notation, and write $C = C(X)$ for an arbitrary commutative C*-algebra, and $C \simeq C(X)$ for an arbitrary commutative C*-subalgebra of a C*-algebra $A$.
\end{remark}

Recall from Lemma~\ref{lem:equivalence} that sets may also be encoded algebraically through the algebra of discrete (possibly unbounded) functions as $X \mapsto \C^X$. The rest of the paper will also discuss `unbounded' discretizations.

\begin{definition}\label{def:unboundeddiscretization}
  Let $\Alg$ denote a category of $\C$-algebras containing the algebras $\C^X$ for any set $X$ with the $*$-homomorphisms that are continuous with respect to the topology of pointwise convergence. Given a C*-algebra $A$, an \emph{unbounded $\Alg$-discretization} is a homomorphism $\phi \colon A \to M$ whose restriction to each commutative C*-subalgebra $C \simeq C(X)$ of $A$ factors through the inclusion 
  $C(X) \to \C^X$ via a morphism $\phi_C \colon \C^X \dashrightarrow M$ in $\Alg$.
  \[\begin{tikzpicture}[xscale=3,yscale=1]
    \node (tl) at (0,1) {$A$};
    \node (tr) at (1,1) {$M$};
    \node at (-.3,0) {$C \simeq $};
    \node (bl) at (0,0) {$C(X)$};
    \node (br) at (1,0) {$\C^X$};
    \draw[->] (tl) to node[above] {$\phi$} (tr);
    \draw[right hook->] (bl) to (br);
    \draw[left hook->] (bl) to (tl);
    \draw[->,dashed] (br) to node[right] {$\phi_C$} (tr); 
  \end{tikzpicture}\]
  Define \emph{injective}, \emph{faithful}, and \emph{functorial} unbounded discretizations analogous to the bounded case.
  For $\Alg = \proCstar$ we refer to \emph{unbounded pro-C*-discretizations}.
\end{definition}

\section{Functorial discretizations through profinite completion}
\label{sec:profinite}

For a compact Hausdorff space $X$, the natural inclusion $C(X) \to \linfty(X)$ is a W*-discretization
of the corresponding commutative C*-algebra. 
Also, if $A$ is a finite-dimensional C*-algebra, then the identity map $A \to A$ is a
W*-discretization. 
This section provides a common generalization of these two examples: Theorems~\ref{thm:profinite} and~\ref{thm:subhomogeneous} below show that the profinite completion of a C*-algebra is a functorial discretization that is faithful for a large class of algebras.

For a C*-algebra $A$, let $\F(A)$ denote the family of closed ideals $I$ of $A$ for which $A/I$ is
finite-dimensional. Then $\F(A)$ is closed under finite intersections, as is readily verified by embedding $A/(I \cap J) \to A/I \oplus A/J$ for ideals $I,J \in \F(A)$. 
Thus the finite-dimensional C*-algebras $A/I$ for $I \in \F(A)$ form an inversely directed system.
We may take the directed limit of this system either in the category $\Cstar$ to obtain a C*-algebra,
or in the category of topological algebras to obtain a pro-C*-algebra. We denote these two
directed limits by
\begin{align*}
  P_b(A) &= \lim\nolimits_{I \in \F(A)} A/I \quad \mbox{computed in } \Cstar, \\
  P_u(A) &= \lim\nolimits_{I \in \F(A)} A/I \quad \mbox{computed in } \proCstar.
\end{align*}

Given a $*$-homomorphism $f \colon A \to B$ and $J \in \F(B)$, the induced embedding
$A/f^{-1}(J) \hookrightarrow B/J$ ensures that $f^{-1}(J) \in \F(A)$. Universality provides a composite $*$-homomorphism
\[
  P_b(A) = \lim\nolimits_{I \in \F(A)} A/I 
  \;\to\; \lim\nolimits_{J \in \F(B)} A/f^{-1}(J) 
  \;\to\; \lim\nolimits_{J \in \F(B)} B/J = P_b(B)
\]
making the assignments $P_b$ and $P_u$ functorial. 

Notice that the diagram over which the limit $P_b(A)$ is computed consists of W*-algebras with
normal $*$-homomorphisms. 
The subcategory $\Wstar$ of $\Cstar$ is closed under limits since the forgetful functor
$\Wstar \to \Cstar$ is  right adjoint to the universal enveloping W*-algebra functor~\cite{dauns:tensorproduct}.
Thus $P_b(A)$ is a W*-algebra, and for $f \colon A \to B$ in $\Cstar$ the induced morphism $P_b(f) \colon P_b(A) \to P_b(B)$ is a normal $*$-homomorphism.
Thus $P_b$ is a functor $\Cstar \to \Wstar$.

Each of the two functors $P_b$ and $P_u$ is a kind of profinite completion~\cite{elhartiphillipspinto:profinite}.

\begin{definition}
  We call $P_b \colon \Cstar \to \Wstar$ the \emph{bounded profinite completion}, 
  and $P_u \colon \Cstar \to \proCstar$ the \emph{unbounded profinite completion}.
\end{definition}

Let $b(P) \subseteq P$ denote the set of \emph{bounded} elements of a pro-C*-algebra $P$: those elements whose spectrum forms a bounded subset of $\C$. This is a C*-algebra that lies densely in $P$~\cite[Proposition~1.11]{phillips:inverse}.

\begin{proposition}
  If $A$ is a C*-algebra, then $P_b(A) \simeq b(P_u(A))$: the W*-algebra $P_b(A)$ is $*$-isomorphic to the algebra of bounded elements of the pro-C*-algebra $P_u(A)$.
\end{proposition}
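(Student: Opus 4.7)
The plan is to exhibit the bounded-elements functor $b \colon \proCstar \to \Cstar$ as right adjoint to the inclusion $\Cstar \hookrightarrow \proCstar$. Since right adjoints preserve limits, this converts the limit defining $P_u(A)$ in $\proCstar$ into the limit defining $P_b(A)$ in $\Cstar$, yielding the desired isomorphism.

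First I would verify the adjunction: any continuous unital $*$-homomorphism $\phi \colon B \to P$ from a C*-algebra $B$ to a pro-C*-algebra $P$ automatically factors through $b(P)$. Indeed, for each $b \in B$ the spectrum of $\phi(b)$ in $P$ is contained in the spectrum of $b$ in $B$ (by the standard argument that $\phi$ carries inverses to inverses), and the latter is a compact subset of $\C$. The factorization through $b(P) \hookrightarrow P$ is unique by injectivity, producing a natural bijection $\proCstar(B, P) \cong \Cstar(B, b(P))$. Applying $b$ to the limit defining $P_u(A)$ then gives
\[
  b(P_u(A)) \;\simeq\; \lim\nolimits_{I \in \F(A)}^{\Cstar} b(A/I) \;=\; \lim\nolimits_{I \in \F(A)}^{\Cstar} A/I \;=\; P_b(A),
\]
where the middle equality uses that each finite-dimensional $A/I$ already coincides with its own bounded part.

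The main delicacy is the spectral inclusion step: one must confirm that inversion in a pro-C*-algebra $P = \lim P_\alpha$ behaves as expected, ultimately reducing to the fact that a thread $(p_\alpha)$ is invertible in $P$ precisely when each $p_\alpha$ is invertible in $P_\alpha$ (with inverses automatically forming a compatible thread). Once this standard verification is dispatched, the remainder of the argument is purely formal category theory.
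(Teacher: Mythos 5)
Your argument is correct and is essentially the paper's proof in adjunction packaging: both hinge on the single key fact that a continuous unital $*$-homomorphism from a C*-algebra into a pro-C*-algebra has image in the bounded part (the paper cites \cite[Corollary~1.13]{phillips:inverse}, while you reprove it via spectral inclusion), and both then transport the universal property of the limit from $\proCstar$ to $\Cstar$. The paper chases cones over the diagram directly where you invoke that right adjoints preserve limits, but the mathematical content is the same.
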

\begin{proof}
  Suppose that a C*-algebra $B$ forms a cone over the diagram of finite-dimensional algebras $A/I$
  for $I \in \F(A)$. Then $B$ also forms a cone over this diagram in the category $\proCstar$, and this cone factors uniquely through a morphism $B \to P_u(A)$. But the image of this morphism lands in the C*-algebra $b(P_u(A))$
  ~\cite[Corollary~1.13]{phillips:inverse}. Thus $b(P_u(A))$ satisfies the universal property of $\lim_{I \in \F(A)} A/I$ computed in $\Cstar$.
  It follows that the map $P_b(A) \to P_u(A)$ induced by the universal property of the latter is an
  isomorphism onto $b(P_u(A))$. 
\end{proof}

Henceforth we identify $P_b(A)$ with the dense subalgebra $b(P_u(A)) \subseteq P_u(A)$.
Invoking the universal property of limits once again, for each C*-algebra $A$ there
is a $*$-homomorphism $\eta_A \colon A \to P_b(A) \subseteq P_u(A)$ that is natural in $A$. 
This map makes $P_b$ and $P_u$ into functorial discretizations.

\begin{theorem}\label{thm:profinite}
  Bounded profinite completion is a functorial W*-discretization.
  Unbounded profinite completion is an unbounded functorial pro-C*-discretization.
\end{theorem}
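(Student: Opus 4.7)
Since we already have functors $P_b \colon \Cstar \to \Wstar$ and $P_u \colon \Cstar \to \proCstar$ equipped with natural transformations $\eta$, what remains is (a) to exhibit natural isomorphisms $P_b(C) \simeq \linfty(\Spec(C))$ and $P_u(C) \simeq \C^{\Spec(C)}$ identifying $\eta_C$ with the canonical inclusion, and (b) to verify the factorization condition of Definitions~\ref{def:discretization} and~\ref{def:unboundeddiscretization} for arbitrary C*-algebras. Step (b) is immediate from the discussion following the definition of functorial discretization: for each inclusion $i_C \colon C \hookrightarrow A$, set $\phi_C = P_b(i_C)$ (respectively $P_u(i_C)$), which automatically lives in $\Wstar$ (respectively $\proCstar$) because the relevant functor lands there. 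So the proof reduces to step (a).

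For step (a), fix $C = C(X)$ with $X = \Spec(C)$ compact Hausdorff. By Gelfand duality, closed ideals of $C(X)$ correspond bijectively to closed subsets $Z \subseteq X$ via $I_Z = \{f : f|_Z = 0\}$, with quotient $C(X)/I_Z \simeq C(Z)$; finite-dimensionality of the quotient forces $Z$ to be a finite subset $S$, so $C(X)/I_S \simeq \C^S$. The transition maps $C(X)/I_T \to C(X)/I_S$ for $S \subseteq T$ are the obvious restrictions $\C^T \to \C^S$. Consequently a coherent family $(f_S)_S$ in the inverse limit corresponds to a function $f \colon X \to \C$ via $f(x) = f_{\{x\}}(x)$. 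The limit in $\Cstar$ imposes $\sup_S \|f_S\|_\infty < \infty$, which recovers $\linfty(X)$ with its supremum norm; the limit in $\proCstar$ imposes no such bound, recovering $\C^X$ with the topology of pointwise convergence. Under these identifications, $\eta_{C(X)}$, assembled from the restriction maps $C(X) \to \C^S$, is precisely the canonical inclusion $C(X) \hookrightarrow \linfty(X) \hookrightarrow \C^X$. Naturality in $C$ follows from a routine contravariant computation using that a $*$-homomorphism $\phi \colon C(Y) \to C(X)$ dual to $g \colon X \to Y$ satisfies $\phi^{-1}(I_S) = I_{g(S)}$ for finite $S \subseteq Y$.

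The main obstacle is verifying the universal property of the limits cleanly in both categories at once: one must check that $\linfty(X)$ with its supremum norm satisfies the defining property of $\lim_S \C^S$ in $\Cstar$, while $\C^X$ with pointwise convergence does so in $\proCstar$, and that the respective isomorphisms are compatible with the cone of quotient maps. These reduce to elementary point-set verifications, but care is needed to keep the two settings—bounded versus unbounded—distinct so that the norm and topology match what the limit specifies, and so that the identification $P_b(A) = b(P_u(A))$ from the preceding proposition remains compatible with the two commutative case isomorphisms via $\linfty(X) = b(\C^X)$.
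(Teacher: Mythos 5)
Your proposal is correct and follows essentially the same route as the paper: identify the finite-codimensional closed ideals of $C(X)$ with finite subsets $S \subseteq X$ via Gelfand duality, compute $\lim_S C(S)$ as $\linfty(X)$ in $\Cstar$ and as $\C^X$ in $\proCstar$, match $\eta_{C(X)}$ with the canonical inclusion, and verify naturality on morphisms using $f^{-1}(I_S) = I_{\fhat(S)}$. Your additional ``step (b)'' is exactly the observation the paper makes in the discussion following the definition of functorial discretization, so nothing is missing.
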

\begin{proof}
  For a commutative C*-algebra $C = C(X)$, each $I \in \F(C)$ is of the form $I = I_S =
  \{f \in C \mid f(S) = 0\}$ for some finite subset $S \subseteq X$. The surjection $C \twoheadrightarrow C/I \simeq C(S)$
  is Gelfand dual to the inclusion $S \hookrightarrow X$. Thus
  \begin{align*}
  P_b(C(X)) &= \lim\nolimits_{S \subseteq X} C(S) \simeq \linfty(X), \\
  P_u(C(X)) &= \lim\nolimits_{S \subseteq X} C(S) \simeq \C^X,
  \end{align*}
  and under these isomorphisms the natural map $\eta_C \colon C \to P_b(C) \subseteq P_u(C)$
  corresponds to the natural inclusion $C(X) \hookrightarrow \linfty(X) \subseteq \C^X$.

  It remains to verify that these functors behave as expected on morphisms. Fix a $*$-homomorphism
  $f \colon B = C(Y) \to C = C(X)$, which is Gelfand dual to a continuous function $\fhat \colon X \to Y$.
  For any finite set $S \subseteq X$, the restriction of $\fhat$ to $S \to \fhat(S)$
  is Gelfand dual to $C(\fhat(S)) \simeq B/f^{-1}(I_S) \to
  C/I_S \simeq C(S)$. Taking the directed limit in $\Wstar$ over finite subsets $S \subseteq X$, we see that the induced map 
  $P_b(f) \colon P_b(B) \to P_b(C)$ corresponds to $\linfty(\fhat)$ under the isomorphisms $P_b(B) \simeq \linfty(Y)$
  and $P_b(C) \simeq \linfty(X)$. This completes the proof for $P_b$; the 
  analogous argument in $\proCstar$ also holds for $P_u$. 
\end{proof}

\begin{example}
  Let $A=\M_n(C(X))$ for a compact Hausdorff space $X$. Then $P_b(A) = \M_n(\linfty(X))$ and $P_u(A) = \M_n(\C^X)$.
\end{example}
\begin{proof}
  Write $C = C(X)$, and recall that every closed ideal $J \subseteq \M_n(C)$ is of the form $\M_n(I)$ for some closed ideal $I\subseteq C$~\cite[Corollary~17.8]{lam:lectures}. Such an ideal $J$ has finite codimension in $A$ if and only if $I$ has finite codimension in $C$. Thus 
  \begin{align*}
    P_b(A) &=\; \lim\nolimits_{J \in \F(A)} A/J \;=\; \lim\nolimits_{I \in \F(C)} \M_n(C)/\M_n(I) \\
    &\simeq\; \lim\nolimits_{I \in \F(C)} \M_n(C/I) \;\simeq\; \M_n(\linfty(X))
  \end{align*}
  and similarly $P_u(A) \simeq \M_n(\C^X)$.
\end{proof}

Let us emphasize that, even though the profinite completion functors yield discretizations of \emph{all} C*-algebras, there are many C*-algebras $A$ for which $P_b(A) = P_u(A) = 0$ is trivial. Indeed, if $A$ is any C*-algebra with no finite-dimensional
representations, then by construction of the profinite completions we necessarily have 
$P_b(A) = P_u(A) = 0$. Example include: the algebra $B(H)$ of bounded operators on an infinite-dimensional Hilbert space $H$; the CCR algebra~\cite{petz:ccr}; the Calkin algebra $B(H)/K(H)$; and the (separable) Cuntz algebra $\mathcal{O}_n$ generated by $n \geq 2$ isometries~\cite{cuntz:simple}.
Thus it is interesting to see which algebras have injective or faithful discretizations to their profinite completion. This is addressed in the next theorem.

Recall that a C*-algebra $A$ is \emph{residually finite-dimensional} when it has a faithful family of finite-dimensional representations. Similarly, $A$ is \emph{subhomogeneous} when there is an integer $n \geq 1$ such that every irreducible representation of $A$ has dimension at most~$n$; this is equivalent~\cite[Proposition~IV.1.4.3]{blackadar:algebras} to $A$ being isomorphic to a C*-subalgebra of $\M_k(C)$ for a commutative C*-algebra $C$ and an integer $k \geq 1$. For a point $x$ in a set $X$, we let $\delta_x = \chi_{\{x\}} \in \linfty(X) \subseteq \C^X$ denote the indicator function of the singleton $\{x\}$.

\begin{theorem}\label{thm:subhomogeneous}
  For a C*-algebra $A$, the functorial discretizations $P_b$ and $P_u$ are:
  \begin{enumerate}
    \item injective if and only if $A$ is residually finite-dimensional;
    \item faithful if $A$ is subhomogeneous.
  \end{enumerate}
\end{theorem}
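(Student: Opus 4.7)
The plan is to take the functorial choice $\phi_C := P_b(i_C)$ (or $P_u(i_C)$ in the pro-C* case) in the definition of discretization, where $i_C \colon C \hookrightarrow A$ is the inclusion of a commutative C*-subalgebra. Since $P_b(A) \hookrightarrow P_u(A)$ via bounded elements, the injectivity question is the same for both profinite completions.

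For (i), observe that $\ker \eta_A = \bigcap_{I \in \F(A)} I$ coincides with the intersection of kernels of all finite-dimensional representations of $A$. By definition, this vanishes precisely when $A$ is residually finite-dimensional.

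For (ii), suppose $A$ is subhomogeneous with $A \hookrightarrow \M_n(C(Y))$. Then $A$ is residually finite-dimensional, since the restrictions of the evaluations $\ev_y \colon \M_n(C(Y)) \to \M_n(\C)$ jointly separate $A$; in particular $\eta_A$ is injective. To show each $\phi_C \colon \linfty(X) \to P_b(A)$ is injective, I would analyze its kernel: for $J \in \F(A)$ the inclusion $i_C$ descends to an injection $C/i_C^{-1}(J) \hookrightarrow A/J$, and since $C/i_C^{-1}(J)$ is a finite-dimensional commutative C*-algebra it equals $C(S_J)$ for a unique finite subset $S_J \subseteq X$. Under the identification $P_b(C) \simeq \linfty(X) = \lim_S C(S)$ from Theorem~\ref{thm:profinite}, the projection to $C(S_J)$ is $f \mapsto f|_{S_J}$, so
\[
  \ker \phi_C \;=\; \{ f \in \linfty(X) : f|_{S_J} = 0 \text{ for all } J \in \F(A) \}.
\]
It therefore suffices to prove $\bigcup_{J \in \F(A)} S_J = X$; the same analysis with $\C^X$ in place of $\linfty(X)$ then handles $P_u$.

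The crux is producing, for each $x \in X$, some $J \in \F(A)$ with $x \in S_J$. I would extend the character $\ev_x \colon C \to \C$ to a pure state $\omega$ on $A$ by the pure state extension theorem. The GNS representation $\pi_\omega \colon A \to B(H_\omega)$ is irreducible, and subhomogeneity forces $\dim H_\omega \leq n$, so $J := \ker \pi_\omega \in \F(A)$. Writing $\xi$ for the cyclic vector, if $f \in C$ satisfies $\pi_\omega(f) = 0$ then
\[
  |f(x)|^2 \;=\; \omega(f^* f) \;=\; \|\pi_\omega(f)\xi\|^2 \;=\; 0,
\]
which gives $i_C^{-1}(J) \subseteq I_{\{x\}}$ and hence $x \in S_J$. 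This is the step I expect to be the main obstacle: it is exactly where the subalgebra-level input (pure state extension) and the algebra-level input (finite-dimensionality of the GNS representation via subhomogeneity) must cooperate, and it is what makes the subhomogeneity hypothesis essential.
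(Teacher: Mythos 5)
Your proposal is correct and follows essentially the same route as the paper: part (i) is the same observation about $\bigcap_{I\in\F(A)}I$, and for part (ii) both arguments extend $\ev_x$ to a pure state of $A$, use subhomogeneity to make the GNS representation finite-dimensional, and then verify via the cyclic vector that $x$ lies in the finite set $S_J$ supporting the induced quotient of $C$. The only cosmetic difference is that you compute $\ker\phi_C$ directly from the limit description, whereas the paper invokes normality/continuity to reduce to checking that no $\delta_x$ dies; both reductions lead to the identical key step.
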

\begin{proof}
  (i) If $A$ is residually finite-dimensional, every nonzero $a \in A$ allows $I_a \in \F(A)$ with $a \notin I_a$ (meaning that $a$ has nonzero image in $A/I_a$). Thus $a$ is not in the kernel of $\eta_A \colon A \to \lim_{I \in \F(A)} A/I = P_b(A) \subseteq P_u(A)$. Hence $\eta_A$ is injective. (See also~\cite[Lemma~1.10]{elhartiphillipspinto:profinite}.)
  The converse follows directly from the definition.

  (ii) Consider a commutative C*-subalgebra $C(X) \subseteq A$, and $x \in X$. Because the homomorphisms $\linfty(X) \simeq P_b(C(X)) \to P_b(A)$ and $\C^X \simeq P_u(C(X)) \to P_u(A)$ are respectively normal and continuous, it suffices to show
  that $\delta_x \in \linfty(X) \subseteq \C^X$ is not in their kernel.
  Indeed, the kernel $I$ of either morphism is an ideal generated by a characteristic function $\chi_S$ for some $S \subseteq X$, so that $I$ contains exactly those $\delta_x$ with $x \in S$. Hence if all $\delta_x \notin I$, then $S = \varnothing$ and therefore $I = 0$.

  Evaluation at $x$ is a pure state on $C(X)$, which extends~\cite[II.6.3.2]{blackadar:algebras} to a pure state $\rho_x$ on $A$. Because $A$ is subhomogeneous, the GNS construction applied to $\rho_x$ yields a finite-dimensional representation $\pi \colon A \to B(\C^n) \simeq \M_n(\C)$ for some integer 
  $n \geq 1$, with cyclic vector $v_x \in \C^n$. Let $I \in \F(A)$ denote the kernel of $\pi$. 
  The induced $*$-homomorphism $\psi \colon \linfty(X) \to A/I \hookrightarrow \M_n(\C)$ has image 
  isomorphic to $C(S)$ for some finite subset $S \subseteq X$; in fact, this set $S$ is characterized
  as those pure states on $C(X)$ that are induced by vector states of the representation $\pi$.
  Now $\pi(f) v_x = f(x) v_x$ for $f \in C(X)$ by construction of $\pi$. 
  Thus $x \in S$, so that $\delta_x$ is not in the kernel of $\psi$.
  It follows that $\delta_x$ has nonzero image in each of the limit algebras $P_b(A)$ and $P_u(A)$,
  as desired.
\end{proof}

\begin{remark}
  For C*-algebras $A$ that are residually finite-dimensional but not subhomogeneous, the natural map $A \to P_b(A)$ is technically an injective discretization, but it does not satisfy all desiderata for an `algebra of bounded functions on the noncommutative underlying set' of $A$. 
  Consider the C*-sum $A = \overline{\bigoplus_{k=1}^\infty} \M_k(\C)$. 
  Let $I_n\subseteq A$ denote the kernel of the projection $A \twoheadrightarrow \M_1(\C) \oplus \cdots \oplus \M_n(\C)$ onto the first $n$ components.
  By an argument similar to that in~\cite[Lemma~7.5]{kaplansky:structure}, the kernel 
  of any finite-dimensional representation of $A$ must contain some $I_n$.
  It follows that the $I_n$ form a cofinal chain in $\F(A)$, so that the profinite completion
  \[
    A \to P_b(A) \simeq \lim\nolimits_{n \to \infty} A/I_n \simeq A
  \]
  is an isomorphism. But this is far from the behavior
  one would expect when comparing to the commutative example 
  $C = \overline{\bigoplus_{k=1}^\infty} \C \simeq \linfty(\N) \simeq C(\beta \N)$; the 
  profinite completion $C \to P_b(C)$ corresponds under this isomorphism to the embedding
  $C \simeq C(\beta \N) \to \linfty(\beta\N)$, indicating that $C$ is `far below' $P_b(C)$
  as a subalgebra.
\end{remark}

Almost all faithful discretizations of C*-algebras we know are supplied by Theorem~\ref{thm:subhomogeneous} above. We conclude this section by describing another significant example of a faithful compatible discretization that is not of this form.

\begin{example}
  For an infinite-dimensional Hilbert space $H$, consider the \mbox{C*-subalgebra} $A = \C \oplus K(H)$ of $B(H)$ generated by the identity and the compact operators. The embedding $A \hookrightarrow B(H)$ is a faithful compatible W*-discretization.
\end{example}
\begin{proof}
  Any commuting set of self-adjoint compact operators on $H$ has an orthonormal basis of $H$ of simultaneous eigenvectors, so the same remains true for commuting sets of self-adjoint operators in $A$.
  Let $C \simeq C(X) \subseteq A$ be a commutative C*-subalgebra. For $x \in X$ let $p_x\in B(H)$ denote
  the projection onto the simultaneous eigenspace  $\{v \in H \mid f \cdot v = f(x)v \mbox{ for all } 
  f \in C\}$. Now each $p_x \neq 0$ and $\sum p_x = 1$ in $B(H)$. It follows that the W*-subalgebra $W_C$ generated by the $p_x$ is
  isomorphic to $\linfty(X)$, and the fact that $fp_x = p_xf = f(x) \cdot p_x$ for all $f \in C$
  guarantees that the natural inclusion $C \subseteq W_C$ corresponds under this isomorphism
  to the natural inclusion $C(X) \subseteq \linfty(X)$. Thus the discretization is faithful.

  Compatibility for commutative C*-subalgebras $C \subseteq D \subseteq A$ is readily established 
  from the simple observation that a simultaneous eigenspace for $D$ restricts to a simultaneous
  eigenspace for $C$.
\end{proof}

The example above is a faithful compatible W*-discretization for which we do not know of any extension 
to an unbounded discretization.

\section{Obstructions to discretizations with many projections}
\label{sec:nogo}

Can the bounded faithful functorial W*-discretization for subhomogeneous C*-algebras of 
Theorem~\ref{thm:subhomogeneous} be extended to general C*-algebras through some method
other than profinite completion? Perhaps surprisingly, we prove in this section that the answer is no: 
any W*-discretization of the algebra $B(H)$ for an infinite-dimensional Hilbert 
space $H$ is necessarily zero. 
In fact, the obstruction is even more serious: if we replace the category of W*-algebras
(`noncommutative measurable spaces') with the category of AW*-algebras~\cite{kaplansky:projections, berberian} (`noncommutative complete Boolean algebras'~\cite{heunenreyes:activelattice}), the obstruction persists. 

The next definition is crucial to our obstructions, and relies on the following notions from measure theory.
An \emph{atom} of a measure space $(X,\mu)$ is a measurable subset $U \subseteq X$ with $\mu(U)>0$, 
such that $\mu(V)<\mu(U)$ implies $\mu(V) = 0$ for any measurable subset $V \subseteq U$. 
An atom of a regular Borel measure on a locally compact Hausdorff space is necessarily a 
singleton~\cite[2.IV]{knowles:nonatomic}. 
A measure is \emph{diffuse} if it has no atoms.
We will say that a positive linear functional $\psi \colon C(X) \to \C$ of a commutative 
C*-algebra, given by $\psi(f)=\int f \, d\mu$ for a regular Borel measure $\mu$ on $X$, 
is \emph{diffuse} when $\mu$ is diffuse.

\begin{definition}\label{def:relativelydiffuse}
  Let $A$ be a C*-algebra. A pair of commutative C*-sub\-algebras $C$ and $D$ is \emph{relatively diffuse} when every extension of a pure state of $D$ to a state of $A$ restricts to a diffuse state on $C$.
\end{definition}

\begin{example}\label{ex:kadisonsinger}
  Consider the separable Hilbert space $H=L^2[0,1]$, and the C*-algebra $A=B(H)$.
  Write $D$ for the discrete maximal abelian W*-subalgebra generated by the projections $q_n$ 
  onto the Fourier basis vectors $e_n = \exp(2\pi i n -)$ for $n \in \Z$, and $C$ for the continuous
  maximal abelian W*-subalgebra $\Linfty[0,1]$.
  Then $C$ and $D$ are relatively diffuse.
\end{example}
\begin{proof}
  There is a canonical conditional expectation $E \colon A \to D$ that sends $f \in A$ to its diagonal part $\sum q_n f q_n$.
  For $f \in C$ then $E(f) = \int_0^1 f(t)\, \mathrm{d}t$ because
  \[
    \inprod{f e_n}{e_n} 
    = \int_0^1 f(t) \cdot e^{2 \pi int} \cdot \overline{e^{2 \pi int}} \, \mathrm{d}t 
    = \int_0^1 f(t) \, \mathrm{d}t.
  \]
  Because $\psi$ is a pure state of $D$ now $\psi = \psi \circ E$ by the solution of the Kadison-Singer problem~\cite{marcusspielmansrivastava:kadisonsinger}.
  Hence $\psi(f) = \psi(E(f)) = \psi( \int_0^1 f(t)\,\mathrm{d}t) = \int_0^1 f(t)\,\mathrm{d}t$.
\end{proof}

\begin{example}\label{ex:separable}
  For $H = L^2[0,1]$, consider any separable C*-subalgebra $C \subseteq \Linfty[0,1] \subseteq B(H)$ 
  for which the state $f \mapsto \int_0^1 f(t) \, dt$ is diffuse (such as $C = C[0,1]$). 
  Then there is a separable C*-subalgebra $A \subseteq B(H)$ containing $C$ and a commutative 
  C*-subalgebra $D$ generated by projections, with $C$ and $D$ relatively diffuse.
\end{example}
\begin{proof}
  Let $e_n$ and $E$ be as in Example~\ref{ex:kadisonsinger}.
  Because $C$ is separable, we can fix a sequence $\{f_i\}_{i=1}^\infty$ of elements whose linear span is dense
  in $C$. For each $f_i$ and for each integer $j \geq 1$, the positive solution to the 
  paving conjecture~\cite{marcusspielmansrivastava:kadisonsinger} ensures that there is a finite set of 
  projections $p_k = \smash{p_k^{(i,j)}}$ in the discrete maximal abelian subalgebra of $B(H)$ relative 
  to the Fourier basis $e_n$ with $\sum p_k = 1$ and $\| p_k (f_i - E(f_i))p_k \| \leq 1/j$.
  Let $D$ be the commutative C*-subalgebra of $B(H)$ generated by the $\smash{p^{(i,j)}_k}$  for all $i$, $j$, 
  and $k$.
  Let $A$ be the C*-subalgebra of $B(H)$ generated by $C$ and $D$; as both $C$ and $D$ are countably
  generated, the same is true of $A$, whence $A$ is separable.
  An argument familiar in the literature on the Kadison-Singer problem (as in~\cite[p310]{anderson:extensions}) 
  shows that any extension of a pure state $\psi_0$ on $D$ to a state $\psi$ on $A$ satisfies 
  $\psi(f) = \psi_0(E(f))$ for all $f \in C$. The same computation as in Example~\ref{ex:kadisonsinger}
  shows that $\psi(f) = \int_0^1 f(t) \, dt$, which is diffuse on $C$ by hypothesis.
\end{proof}

\begin{remark}
  It is possible to modify Examples~\ref{ex:kadisonsinger} and~\ref{ex:separable} so that the conclusions
  can be reached without using the full force of Kadison-Singer. In either case, identify the algebra 
  $C = C(\mathbb{T})$ of continuous functions on the unit circle with the subalgebra 
  $\{f \mid f(0) = f(1)\} \subseteq C[0,1] \subseteq B(H)$. 
  The algebra of Fourier polynomials---or more generally, the Wiener algebra $A(\mathbb{T})$---is
  a dense subalgebra of $C$ and lies in the algebra $M_0 \subseteq B(H)$ of operators that
  are $l_1$-bounded in the sense of Tanbay~\cite{tanbay:extensions} with respect to the
  Fourier basis $\{e_n \mid n \in \Z\}$. Thus $C$ lies in the norm closure $M$ of $M_0$, and
  it was shown in~\cite{tanbay:extensions} (without the full force of Kadison-Singer)
  that every element of $M$ is compressible (that is, the operator $f - E(f)$
  satisfies paving with respect to the basis $e_n$ for any $f \in M$). The computations in either example
  given above may now proceed in the same manner.
\end{remark}

The relatively diffuse subalgebras $C$ and $D$ in the examples above had pure states 
of $D$ inducing a \emph{unique} diffuse state on $C$. We thank the referee for the 
following example which allows for possibly non-unique extensions.

\begin{example}\label{ex:circle}
  Let $A$ and $D$ be as in Example~\ref{ex:kadisonsinger}, but consider the commutative C*-subalgebra of $A$ generated by the bilateral shift $e_n \mapsto e_{n+1}$, and let $C$ be its bicommutant.
  Then $C$ and $D$ are relatively diffuse.
\end{example}
\begin{proof}
  Write $C_0$ for the C*-subalgebra generated by the shift $u \colon H \to H$; its Gelfand spectrum is the unit circle $\T = \{ \lambda \in \C \mid |\lambda|=1\}$~\cite[Problem~84]{halmos:problembook}. Let $f_n \in C(\T)$ be a decreasing sequence converging to the characteristic function $\delta_\lambda = \chi_{\{\lambda\}}$ of some $\lambda \in \T$. 
  Then, since the bounded sequence $(f_n)$ converges pointwise to $\delta_\lambda$, the sequence $(f_n(u))$  in $C_0$ converges strongly to the projection $\delta_\lambda(u)$ in $C$. But $\mathrm{lim}_n \inprod{f_n(u)(e_0)}{e_0} = \inprod{\delta_\lambda(e_0)}{e_0}$ vanishes because $u$ has no eigenvectors. 
  Hence $\|E(f_n(u))\| = \|\inprod{f_n(u)(e_0)}{e_0} 1_H\| \to 0$.
  Thus a state $\psi$ of $A$ that is pure on $D$ satisfies $\psi(f_n) = \psi(E(f_n)) \to 0$, and is therefore diffuse on $C$.
\end{proof}

Relatively diffuse pairs of commutative C*-subalgebras are inherited along $*$-homomorphisms, as follows.

\begin{lemma}\label{lem:functorialpair}
  Let $\phi \colon A \to B$ be a morphism in $\Cstar$.
  If two commutative C*-subalgebras $C, D \subseteq A$ are relatively diffuse,
  then so are $\phi(C),\phi(D) \subseteq B$.
\end{lemma}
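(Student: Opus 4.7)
The plan is to pull back states along $\phi$ and invoke the relative diffuseness of $C$ and $D$ inside $A$. Let $\tilde\psi$ be a state of $B$ whose restriction $\psi := \tilde\psi|_{\phi(D)}$ is pure; the aim is to show $\tilde\psi|_{\phi(C)}$ is diffuse. Consider the state $\omega := \tilde\psi \circ \phi$ on $A$, and note that $\omega|_D = \psi \circ \phi|_D$.

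First I would verify that $\omega|_D$ is a pure state of $D$. The corestriction $\phi|_D \colon D \twoheadrightarrow \phi(D)$ is a surjective $*$-homomorphism of commutative unital C*-algebras, hence Gelfand dual to a closed inclusion $\Spec(\phi(D)) \hookrightarrow \Spec(D)$. Since pure states of a commutative C*-algebra are precisely point evaluations, $\psi$ pulls back along this inclusion to a point evaluation on $D$. Thus $\omega$ is an extension of a pure state of $D$ to a state of $A$, and the relative diffuseness hypothesis forces $\omega|_C$ to be a diffuse state of $C$.

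The main step is then to transfer diffuseness back along the surjection $\phi|_C \colon C \twoheadrightarrow \phi(C)$. Writing $C \simeq C(X)$ and $\phi(C) \simeq C(Y)$, this map is dual to a closed embedding $\iota \colon Y \hookrightarrow X$. If $\nu$ is the regular Borel measure on $Y$ representing $\tilde\psi|_{\phi(C)}$, then $\omega|_C = \tilde\psi|_{\phi(C)} \circ \phi|_C$ is represented by the pushforward $\iota_*\nu$ on $X$. Any singleton atom $\{y\}$ of $\nu$ would satisfy $(\iota_*\nu)(\{\iota(y)\}) = \nu(\{y\}) > 0$ by injectivity of $\iota$, producing a singleton atom of $\iota_*\nu$ and contradicting its diffuseness. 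Hence $\nu$ is diffuse, as required.

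The main subtlety is getting the direction of the diffuseness transfer right: pulling back a diffuse state along a general $*$-homomorphism need not yield a diffuse state, but here the \emph{surjectivity} of $\phi|_C$ (automatic from the definition of its image) translates via Gelfand duality into the \emph{injectivity} of $\iota$, which is precisely what is needed to ensure singleton atoms of $\nu$ cannot collapse or be washed out under push-forward.
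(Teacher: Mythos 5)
Your proof is correct and follows essentially the same route as the paper's: pull the state back along $\phi$, observe that purity is preserved on $D$ because $\phi|_D$ is a unital $*$-homomorphism onto a commutative algebra (so characters pull back to characters), apply relative diffuseness in $A$, and then transfer diffuseness across the closed embedding $\Spec(\phi(C)) \hookrightarrow \Spec(C)$. Your explicit pushforward/atom argument in the last step is just a slightly more detailed rendering of the paper's remark that the measure representing $\tilde\psi|_{\phi(C)}$ is the restriction of the diffuse measure representing $(\tilde\psi\circ\phi)|_C$.
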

\begin{proof}
  Fix a pure state $\psi_0$ on $\phi(D)$, and let $\psi$ be any extension to a state on $B$. 
  Then $\psi \circ \phi$ is a state on $A$ that extends $\psi_0 \circ \phi$ from $D$;
  observe that the latter is a pure state on $D$ as it is a composition of a $*$-homomorphism
  with a pure state.
  By hypothesis, the restriction of $\psi \circ \phi$ to $C$ is diffuse.
  As the restriction of $\phi$ to $C \twoheadrightarrow \phi(C)$ is Gelfand dual to the inclusion $\Spec(\phi(C)) \hookrightarrow \Spec(C)$ of a closed subspace,
  the measure on $\Spec(\phi(C))$ corresponding to $\psi|_{\phi(C)}$ is the restriction of the
  measure on $\Spec(C)$ corresponding to $\psi_0|_C$, which is diffuse.
  It follows that the restriction of $\psi$ to $C'$ is diffuse.
\end{proof}

The major result below and its many corollaries will refer to commutative diagrams of the following
kind, where $A$ is a C*-algebra with relatively diffuse commutative C*-subalgebras $C \simeq C(X)$ and 
$D \simeq C(Y)$.
\begin{equation}\label{eq:nogodiagram}
  \begin{aligned} 
    \begin{tikzpicture}[xscale=2.5,yscale=1.1]
         \node (C) at (-.35,2) {$C \simeq$}; 
         \node (D) at (-.35,0) {$D \simeq$};
	  \node (CX) at (0,2) {$C(X)$};
	  \node (A) at (0,1) {$A$};
	  \node (CY) at (0,0) {$C(Y)$};
	  \node (lX) at (1,2) {$\linfty(X)$};
	  \node (B) at (1,1) {$M$};
	  \node (lY) at (1,0) {$\linfty(Y)$};
	  \draw[right hook->] (CX) to (A);
	  \draw[left hook->] (CY) to (A);
	  \draw[->] (A) to node[above] {$\phi$} (B);
	  \draw[right hook->] (CX) to (lX);
	  \draw[right hook->] (CY) to (lY);
	  \draw[->] (lX) to node[right] {$\phi_C$} (B);
	  \draw[->] (lY) to node[right] {$\phi_D$} (B);
    \end{tikzpicture}
  \end{aligned}
\end{equation}

\begin{theorem}\label{thm:nogo}
  If a C*-algebra $A$ has relatively diffuse commutative C*-sub\-alge\-bras $C \simeq C(X)$ and $D \simeq C(Y)$, 
  and if there is a C*-algebra $M$ with $*$-homomorphisms $\phi$, $\phi_C$ and $\phi_D$ 
  making the diagram~\eqref{eq:nogodiagram} commute, then  for any $x \in X$ and $y \in Y$:
  \[
  \phi_C(\delta_x) \phi_D(\delta_y)=0.
  \] 
\end{theorem}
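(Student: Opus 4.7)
The plan is to argue by contradiction. Write $p = \phi_C(\delta_x)$ and $q = \phi_D(\delta_y)$, both projections in $M$, and suppose $pq \neq 0$. Then $qpq = (pq)^*(pq)$ is a nonzero positive element of $M$, so there is a state $\omega$ on $M$ with $\omega(qpq) > 0$. The Cauchy--Schwarz inequality for states, applied to $a = q$ and $b = pq$, gives $\omega(qpq)^2 \leq \omega(q)\,\omega(qpq)$, hence $\omega(q) \geq \omega(qpq) > 0$. This licenses the definition of the compression state
\[
\omega_q(m) := \frac{\omega(qmq)}{\omega(q)},
\]
which is a genuine state on $M$ with $\omega_q(p) = \omega(qpq)/\omega(q) > 0$.

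Next I would check that $\omega_q \circ \phi$ is an extension to $A$ of the pure state ``evaluation at $y$'' on $D \simeq C(Y)$. The key identity is $\delta_y \cdot f \cdot \delta_y = f(y)\,\delta_y$ in $\linfty(Y)$, which upon applying the $*$-homomorphism $\phi_D$ yields $q\,\phi_D(f)\,q = f(y)\,q$ in $M$ for every $f \in \linfty(Y)$. Combined with $\phi(d) = \phi_D(d)$ for $d \in D \subseteq C(Y) \subseteq \linfty(Y)$, which is exactly the commutativity of~\eqref{eq:nogodiagram}, this gives
\[
\omega_q(\phi(d)) = \frac{\omega(q\,\phi_D(d)\,q)}{\omega(q)} = \frac{d(y)\,\omega(q)}{\omega(q)} = d(y)
\]
for $d \in D$. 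So $\omega_q \circ \phi|_D$ is evaluation at $y$, a pure state of $C(Y)$.

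By the relatively diffuse hypothesis, the restriction of $\omega_q \circ \phi$ to $C \simeq C(X)$ is then a diffuse state, corresponding via the Riesz representation theorem to a regular Borel probability measure $\mu$ on $X$ with $\mu(\{x\}) = 0$. The contradiction will follow from $\mu(\{x\}) \geq \omega_q(p) > 0$. Indeed, for any open neighborhood $U$ of $x$ in the compact Hausdorff space $X$, Urysohn's lemma supplies $f \in C(X)$ with $\delta_x \leq f \leq \chi_U$ pointwise in $\linfty(X)$. Since $*$-homomorphisms are order-preserving,
\[
\omega_q(p) = \omega_q(\phi_C(\delta_x)) \leq \omega_q(\phi_C(f)) = \omega_q(\phi(f)) = \int_X f\, d\mu \leq \mu(U),
\]
where I used $\phi_C(f) = \phi(f)$ from~\eqref{eq:nogodiagram}. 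Outer regularity of $\mu$ then gives $\omega_q(p) \leq \inf_{U \ni x} \mu(U) = \mu(\{x\}) = 0$, contradicting $\omega_q(p) > 0$.

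The main obstacle is choosing the auxiliary state correctly: the compression $\omega_q$ is engineered so that $\omega_q \circ \phi$ extends a pure state on $D$ (so the diffuse hypothesis can be invoked) while simultaneously detecting $p$ on the $C$ side. Once this is set up, the remaining work -- passing from the $\linfty(X)$-level indicator $\delta_x$ to the $C(X)$-level measure $\mu$ via Urysohn and outer regularity -- is routine.
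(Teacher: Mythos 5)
Your proposal is correct and follows essentially the same route as the paper's proof: compress by $q = \phi_D(\delta_y)$ to obtain a state on $A$ extending evaluation at $y$ on $D$, invoke relative diffuseness to get an atomless regular measure on $X$, and squeeze $\delta_x$ under Urysohn functions supported in shrinking neighborhoods of $x$. The only cosmetic difference is that you argue by contradiction with a single normalized state on $M$ (hence your Cauchy--Schwarz step to guarantee $\omega(q)>0$), whereas the paper runs the same estimate for every state on the corner $qMq$, where $q$ is already the unit, and concludes $qpq=0$ directly.
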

\begin{proof}
  Let $x \in X$ and $y \in Y$, and write $p=\phi_C(\delta_x)$ and $q=\phi_D(\delta_y)$. 
  Fix any state $\sigma$ on the C*-algebra $qBq$, and let $\psi$ 
  denote the state on $A$ given by $\psi(a) = \sigma(q\phi(a)q)$. 
  For $g \in D$, observe $\psi(g) = \sigma(\phi_D(\delta_y g \delta_y)) = \sigma(\phi_D(g(y) \delta_y)) = g(y) \sigma(q) = g(y)$,
  so that $\psi$ restricts to a pure state on $D$. 
  By hypothesis, the restriction of $\psi$ to $C$ is of the form $f \mapsto \int f \,d\mu$ 
  for some diffuse Radon measure $\mu$ on $X$. 
  Thus for each integer $n \geq 1$ we may find an open neighborhood $U_n$ of $x$ 
  with $\mu(U_n) \leq \frac{1}{n}$.
  Urysohn's lemma provides a continuous function $f_n \colon X \to [0,1]$ that vanishes 
  on $X \setminus U_n$ and satisfies $f_n(x)=1$.
  Since $\delta_x \leq f_n$ in $\linfty(X)$ we have $p = \phi_C(\delta_x) \leq \phi_C(f_n)$. 
  Positivity of $b\mapsto \sigma(qbq)$ yields
  \[
    \sigma(qpq) \leq \sigma(q\phi_C(f_n)q) = \psi(f_n) = \int f_n \, d\mu \leq \mu(U_n) \leq \frac{1}{n}.
  \]
  As $n \to \infty$ we find that $\sigma(pqp) = 0$ for all states $\sigma$ on $B$, making $q p q = 0$.
  It follows that $\|qp\|^2=\|qpq\|=0$ and thus $pq=(qp)^*=0$.
\end{proof}

Write $\AWstar$ for the category of AW*-algebras with $*$-homomorphisms whose restriction to the projection lattices preserve arbitrary least upper bounds\footnote{See~\cite[Lemma~2.2]{heunenreyes:activelattice} for further characterizations 
of these  morphisms.}; $\Wstar$ is a full subcategory.
We call $\AWstar$-discretizations \emph{AW*-discretizations}.

\begin{corollary}\label{cor:awstarnogo}
  If a C*-algebra $A$ has two relatively diffuse commutative C*-subalgebras,
  then any AW*-discretization $\phi \colon A \to M$ satisfies $M=0$. Consequently, every functorial AW*-discretization
  $F \colon \Cstar \to \AWstar$ has $F(A) = 0$ for such $A$.
\end{corollary}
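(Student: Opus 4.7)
The plan is to combine Theorem~\ref{thm:nogo} with the defining property of morphisms in $\AWstar$: they preserve arbitrary suprema of projections. The observation that drives everything is that while $\linfty(X)$ is ``atomic'' in the sense that $1 = \bigvee_{x \in X} \delta_x$ in its projection lattice, the hypothesis of relative diffuseness forces the corresponding images in $M$ to vanish, leaving no room for $M$ to be nonzero.

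Concretely, first I would apply Theorem~\ref{thm:nogo} to the diagram~\eqref{eq:nogodiagram} arising from the AW*-discretization $\phi \colon A \to M$ and the relatively diffuse pair $C \simeq C(X)$, $D \simeq C(Y)$. This yields $\phi_C(\delta_x)\phi_D(\delta_y) = 0$ for all $x \in X$ and $y \in Y$. Next, since $\phi_C$ and $\phi_D$ are morphisms in $\AWstar$ and therefore preserve arbitrary least upper bounds of projections, and since $1 = \bigvee_{x} \delta_x$ in the projection lattice of $\linfty(X)$ (and likewise for $Y$), unitality gives
\[
\bigvee_{x \in X} \phi_C(\delta_x) = \phi_C(1) = 1_M = \phi_D(1) = \bigvee_{y \in Y} \phi_D(\delta_y).
\]

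Third, I would translate the pointwise orthogonality into a projection-lattice inequality: $\phi_C(\delta_x)\phi_D(\delta_y) = 0$ is equivalent to $\phi_C(\delta_x) \leq 1 - \phi_D(\delta_y)$. Using orthocomplementation (DeMorgan) in the projection lattice of the AW*-algebra $M$, taking the meet over $y$ gives
\[
\phi_C(\delta_x) \;\leq\; \bigwedge_{y \in Y} \bigl(1 - \phi_D(\delta_y)\bigr) \;=\; 1 - \bigvee_{y \in Y} \phi_D(\delta_y) \;=\; 1 - 1_M \;=\; 0,
\]
for every $x \in X$. Reinserting this into the first display yields $1_M = \bigvee_x \phi_C(\delta_x) = 0$, so $M = 0$. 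For the functorial statement, a functorial AW*-discretization $F$ supplies a (compatible) AW*-discretization $\eta_A \colon A \to F(A)$, so the same argument forces $F(A) = 0$.

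I do not expect a serious obstacle here; the corollary is essentially bookkeeping on top of Theorem~\ref{thm:nogo}. The only point worth verifying with care is the interaction between the supremum $1 = \bigvee_x \delta_x$ in $\linfty(X)$ and its preservation by $\phi_C$, which is precisely what distinguishes morphisms in $\AWstar$ from mere $*$-homomorphisms and what makes the AW*-setting (rather than the bare C*-setting) yield the obstruction.
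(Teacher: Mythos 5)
Your proposal is correct and follows essentially the same route as the paper: apply Theorem~\ref{thm:nogo} to get pairwise orthogonality of the $\phi_C(\delta_x)$ and $\phi_D(\delta_y)$, use preservation of least upper bounds of projections by $\AWstar$-morphisms to get $\bigvee_x \phi_C(\delta_x) = 1_M = \bigvee_y \phi_D(\delta_y)$, and conclude each $\phi_C(\delta_x)$ is orthogonal to $1_M$, forcing $1_M = 0$. Your explicit De Morgan computation is just a slightly more detailed rendering of the paper's step ``$p_x$ is orthogonal to $\sum q_y = 1$.''
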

\begin{proof}
  Let $C \simeq C(X)$ and $D \simeq C(Y)$ be the relatively diffuse commutative C*-subalgebras, 
  and let $\phi_C \colon \linfty(X) \to M$ and $\phi_D \colon \linfty(Y) \to M$ be the discretizing morphisms as in 
  Definition~\ref{def:discretization}, yielding a commuting diagram~\eqref{eq:nogodiagram}.  
  For $x \in X$ and $y \in Y$, set $p_x = \phi_C(\delta_x)$ and $q_y = \phi_D(\delta_y)$. 
  As $\sum \delta_x = 1_C$ and $\sum \delta_y = 1_D$ (in the sense of least upper bounds of orthogonal projections),
  and as $\phi_C$ and $\phi_D$ are morphisms in $\AWstar$, we have $\sum p_x = 1 = \sum q_y$ in $M$.
  By Theorem~\ref{thm:nogo}, each $p_x$ is orthogonal to all of the $q_y$, so that $p_x$ is orthogonal to $\sum q_y = 1 \in M$.
  Therefore $p_x=0$ for all $x \in X$, whence $1=\sum p_x = 0$ in $M$ and $M=0$.
\end{proof}

\begin{example}\label{ex:calkin}
  If there is a morphism $B(H) \to A$ in $\Cstar$ for some infinite-dimensional Hilbert space, then 
  $A$ has no nontrivial AW*-discretization.
\end{example}
\begin{proof}
  First note that $H$ as above is unitarily isomorphic to $L^2[0,1] \otimes H$, so $a \mapsto a \otimes 1$ is a
  $*$-homomorphism $B(L^2[0,1]) \to B(L^2[0,1]) \otimes B(H) \simeq B(H)$.
  Example~\ref{ex:kadisonsinger} along with Lemma~\ref{lem:functorialpair} show that $A$ contains
  a relatively diffuse commutative C*-subalgebras, so that Corollary~\ref{cor:awstarnogo} applies.
\end{proof}

In particular, by the last example the Calkin algebra $A=B(H)/K(H)$ has no nontrivial AW*-discretization for $H=L^2[0,1]$.

Theorem~\ref{thm:nogo} has the following consequence for purely ring-theoretic discretizations, with much tamer conclusion than those of Corollaries~\ref{cor:awstarnogo} or~\ref{cor:topologicalnogo}.

\begin{corollary}\label{cor:ringnogo}
  If a C*-algebra $A$ has relatively diffuse C*-subalgebras $C \simeq C(X)$ and $D \simeq C(Y)$, 
  and if there is a commutative diagram of the form~\eqref{eq:nogodiagram} where $M$ is a ring and $\phi, \phi_C, \phi_D$ are 
  ring homomorphisms, then for every $x \in X$ and $y \in Y$:
  \[
    \phi_C(\delta_x) \phi_D(\delta_y)  = \phi_D(\delta_y) \phi_C(\delta_x) = 0.
  \]
\end{corollary}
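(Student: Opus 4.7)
The plan is to reduce the ring-theoretic statement to Theorem~\ref{thm:nogo} via a universal construction. First I would observe that since each of $A$, $\linfty(X)$, $\linfty(Y)$ contains $\C$ in its center and the homomorphisms preserve units, the ring maps $\phi$, $\phi_C$, $\phi_D$ are automatically $\C$-algebra homomorphisms. Consequently the diagram~\eqref{eq:nogodiagram} factors through the pushout $R_0$ in the category of unital $\C$-algebras, formed from $A$, $\linfty(X)$, $\linfty(Y)$ amalgamated along the inclusions of $C$ and $D$. Writing $p = \phi_C(\delta_x)$ and $q = \phi_D(\delta_y)$, it then suffices to show that the corresponding elements $\bar p \bar q$ and $\bar q \bar p$ already vanish in $R_0$.

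Next I would form the parallel C*-algebraic pushout $\tilde M = \linfty(X) *_C A *_D \linfty(Y)$ as an iterated amalgamated free product. By~\cite{blackadar:pushout,pedersen:pullbacks} each of $A, \linfty(X), \linfty(Y)$ embeds in $\tilde M$ as a $*$-subalgebra, so Theorem~\ref{thm:nogo} applies to $\tilde M$ with the canonical inclusions and yields $\delta_x \delta_y = 0$ in $\tilde M$. The hypothesis of relative diffuseness and the argument of Theorem~\ref{thm:nogo} are both symmetric under swapping $(C,x) \leftrightarrow (D,y)$, so the symmetric instance also gives $\delta_y \delta_x = 0$ in $\tilde M$. (These yield both products vanishing directly, so the corollary's asymmetric-looking conclusion does not need to be recovered from a $*$-involution on $M$.)

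The main obstacle is transferring these vanishings from $\tilde M$ down to $R_0$: the canonical $\C$-algebra map $R_0 \to \tilde M$ need not be injective, so knowing the images are zero in $\tilde M$ only places $\delta_x \delta_y$ and $\delta_y \delta_x$ in its kernel. To close this gap, I would retrace the key inequality chain $\sigma(qpq) \leq \sigma(q \phi_C(f_n) q) \leq 1/n$ from the proof of Theorem~\ref{thm:nogo} and rephrase each step as a ring-theoretic identity available in $R_0$. Concretely, the positivity-based inequality $p \leq \phi_C(f_n)$ would be replaced by the explicit $\linfty(X)$-level factorization $f_n - \delta_x = h_n^* h_n$, which transports as an element-level equation $\phi_C(f_n) - p = \phi_C(h_n^*)\phi_C(h_n)$ already holding in $R_0$. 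Verifying that iterating such identities genuinely forces $\bar p \bar q = 0$ in $R_0$ --- rather than merely in its C*-completion $\tilde M$ --- is the hard step of the plan.
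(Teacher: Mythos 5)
Your reduction has the same skeleton as the paper's proof: form the universal ring-theoretic pushout $M_0 = (A \circledast_{C(X)} \linfty(X)) \circledast_{C(Y)} \linfty(Y)$ through which $\phi, \phi_C, \phi_D$ factor, form the parallel C*-algebraic pushout $M_1 = (A *_{C(X)} \linfty(X)) *_{C(Y)} \linfty(Y)$, and apply Theorem~\ref{thm:nogo} to $M_1$ to get $\delta_x \delta_y = 0$ there. But you then stall at exactly the point the paper resolves: you assert that the canonical map $M_0 \to M_1$ ``need not be injective'' and try to route around this. In fact the paper's proof rests on the opposite claim --- it is a folk result (cited to Blecher--Paulsen and Ramsey--Reyes) that the algebraic amalgamated free product embeds into the C*-algebraic one, so orthogonality in $M_1$ pulls back to orthogonality in $M_0$ and hence pushes forward to $M$. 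This embedding is the one missing idea in your plan.

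Your proposed workaround --- re-deriving the vanishing inside $M_0$ by transporting the inequality chain $\sigma(qpq) \le \sigma(q\phi_C(f_n)q) \le 1/n$ as element-level identities $\phi_C(f_n) - p = \phi_C(h_n^*)\phi_C(h_n)$ --- cannot succeed in a bare ring. The proof of Theorem~\ref{thm:nogo} is irreducibly analytic at two points: it needs positivity (the functional $b \mapsto \sigma(qbq)$ is positive, which is what converts the factorization $f_n - \delta_x = h_n^*h_n$ into the inequality $\sigma(qpq) \le \sigma(q\phi_C(f_n)q)$) and it needs a limit as $n \to \infty$ to pass from $\sigma(qpq) \le 1/n$ to $qpq = 0$. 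A general unital $\C$-algebra $M_0$ carries no states, no order, and no topology, so neither step has a ring-theoretic counterpart; knowing that $\phi_C(f_n) - p$ is a product of two elements for every $n$ places no constraint at all on $\bar p \bar q$. You correctly identified where the difficulty lies, but the resolution is the embedding $M_0 \hookrightarrow M_1$, not an algebraization of the analytic argument.
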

\begin{proof}
  Invoking Theorem~\ref{thm:nogo} in the case where 
  \[
  M_1 = (A *_{C(X)} \linfty(X) ) *_{C(Y)} \linfty(Y)
  \]
  is the colimit in $\Cstar$ of the diagram~\eqref{eq:nogodiagram} with $M$ deleted, we obtain
  that the images of $\delta_x$ and $\delta_y$ are orthogonal in $M_1$.
  Now let $R \circledast_S T$ denote the amalgamated free product of rings (which 
  coincides with the amalgamated free product of $\C$-algebras when $S$ is a unital subalgebra 
  of algebras $R$ and $T$), and let 
  \[
  M_0 = (A \circledast_{C(X)} \linfty(X) ) \circledast_{C(Y)} \linfty(Y)
  \] 
  be the colimit in the category of rings of the diagram~\eqref{eq:nogodiagram} with $M$ deleted. 
  There is a natural map $M_0 \to M_1$ induced by the universal property of $M_0$.
  It is a folk result that this is an embedding~\cite{blecherpaulsen:universal,ramseyreyes:amalgamated}.
  Thus the images of $\delta_x$ and $\delta_y$ in $M_0$ are already orthogonal.
  But the morphisms $\phi$, $\phi_C$, and $\phi_D$ of~\eqref{eq:nogodiagram} factor
  universally through $M_0$, so the images of $\delta_x$ and $\delta_y$ in $M$ are 
  orthogonal.
\end{proof}

We conclude this section with an obstruction for unbounded discretizations into topological algebras. 
Write $\cat{TAlg}$ for the category of Hausdorff topological $\C$-algebras with continuous homomorphisms. 
Recall~\cite[Chapter~10]{warner:topological} that a family $(a_i)_{i \in I}$ of elements in a Hausdorff topological ring $R$ is \emph{summable} if the net $(a_J)$ indexed by finite subsets $J \subseteq I$ converges, where $a_J=\sum_{j \in J} a_j$; in that case we write $\sum a_i$ for the limit.

\begin{corollary}\label{cor:topologicalnogo}
  Let $A$ be a C*-algebra with relatively diffuse C*-subalgebras $C \simeq C(X)$ and $D \simeq C(Y)$. 
  Then every unbounded $\cat{TAlg}$-discretization of $A$ is zero. More precisely: if there is a commutative diagram
    \[
    \begin{tikzpicture}[xscale=2.5,yscale=1.1]
      \node (CX) at (0,2) {\llap{$C \simeq$} $C(X)$};
      \node (A) at (0,1) {$A$};
      \node (CY) at (0,0) {\llap{$D \simeq$} $C(Y)$};
      \node (lX) at (1,2) {$\C^X$};
      \node (B) at (1,1) {$M$};
      \node (lY) at (1,0) {$\C^Y$};
      \draw[right hook->] (CX) to (A);
      \draw[left hook->] (CY) to (A);
      \draw[->] (A) to node[above] {$\phi$} (B);
      \draw[right hook->] (CX) to (lX);
      \draw[right hook->] (CY) to (lY);
      \draw[->] (lX) to node[right] {$\phi_C$} (B);
      \draw[->] (lY) to node[right] {$\phi_D$} (B);
    \end{tikzpicture}
  \]
  where $M$ is a Hausdorff topological ring, $\phi_C$ and $\phi_D$ are continuous homomorphisms, and $\phi$ is a homomorphism, then $M=0$.
\end{corollary}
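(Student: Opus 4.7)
The plan is to reduce to the ring-theoretic obstruction already established in Corollary~\ref{cor:ringnogo} and then bootstrap using the topology on $M$. Since $M$ is a Hausdorff topological ring and $\phi$, $\phi_C$, $\phi_D$ are ring homomorphisms, Corollary~\ref{cor:ringnogo} applies verbatim (its proof never used any C*-structure on its target) to yield $\phi_C(\delta_x)\phi_D(\delta_y) = 0$ for all $x \in X$ and $y \in Y$.

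Next I would exploit summability. In $\C^X$ with the topology of pointwise convergence, the family $(\delta_x)_{x \in X}$ is summable with sum $1$: for any $z \in X$, the partial sum $\sum_{x \in J} \delta_x$ evaluated at $z$ equals $1$ as soon as the finite set $J \subseteq X$ contains $z$. A continuous unital ring homomorphism preserves convergent nets, hence summable families, so $(\phi_C(\delta_x))_{x \in X}$ is summable in $M$ with sum $\phi_C(1) = 1$, and symmetrically $\sum_{y \in Y} \phi_D(\delta_y) = 1$ in $M$.

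Finally I would fix $x \in X$ and use separate continuity of multiplication in $M$: left multiplication by $\phi_C(\delta_x)$ preserves the convergent net of partial sums for $(\phi_D(\delta_y))_{y \in Y}$, giving
\[
  \phi_C(\delta_x) = \phi_C(\delta_x) \cdot 1 = \sum_{y \in Y} \phi_C(\delta_x)\phi_D(\delta_y) = 0,
\]
where the last equality holds because every term vanishes by the first step and $M$ is Hausdorff, so the constantly-zero net of partial sums has a unique limit $0$. Summing this identity in turn over $x \in X$, we obtain $1 = \sum_{x \in X} \phi_C(\delta_x) = 0$ in $M$, and therefore $M = 0$.

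I do not anticipate a serious obstacle here: the substance of the argument already lives in Corollary~\ref{cor:ringnogo}, and this result merely transports the orthogonality $\phi_C(\delta_x)\phi_D(\delta_y) = 0$ across a summation. The only point requiring care is verifying that continuous homomorphisms and separately continuous multiplication respect summable families in a Hausdorff topological ring, which is standard and is exactly the ingredient that the definition of summability in $\cat{TAlg}$ was set up to provide.
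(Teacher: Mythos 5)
Your proof is correct and follows essentially the same route as the paper's: reduce to Corollary~\ref{cor:ringnogo}, transport summability of $(\delta_x)_{x\in X}$ and $(\delta_y)_{y\in Y}$ along the continuous homomorphisms, and conclude $1=0$ in $M$. The only cosmetic differences are that the paper extracts the contradiction from the single rectangular net $(p_I q_J) \to 1^2 = 1$ using joint continuity of multiplication, whereas you iterate separate continuity twice, and that invoking Corollary~\ref{cor:ringnogo} formally requires first restricting $\phi_C$ and $\phi_D$ to $\linfty(X) \subseteq \C^X$ and $\linfty(Y) \subseteq \C^Y$ (the corollary is stated for $\linfty$, not $\C^X$), a point the paper makes explicitly and you gloss over with ``applies verbatim.''
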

\begin{proof}
  It suffices to prove the second, more general claim.
  Because the natural embedding $C(X) \hookrightarrow \C^X$ has image in the subring
  $\linfty(X) \subseteq \C^X$ and similarly for $C(Y)$, we may apply Corollary~\ref{cor:ringnogo} 
  to conclude that the idempotents $p_x = \phi_C(\delta_x)$ and $q_y = \phi_D(\delta_y)$ 
  satisfy $p_x q_y = 0$ for all $x \in X$ and $y \in Y$. 

  The orthogonal set of idempotents $\{\delta_x \mid x \in X\}$ is summable with $\sum \delta_x = 1$
  in $\C^X$, so the family of images $(p_x)_{x \in X}$ under the continuous homomorphism $\phi_C$
  is also summable in $M$ with $\sum p_x = 1$. Similarly, we have $(q_y)_{y \in Y}$ summable
  in $M$ with $\sum q_y = 1$.

  Now consider the net $(p_I q_J)$ indexed by the directed set of all `rectangular' subsets 
  $I \times J \subseteq X \times Y$ with both $I \subseteq X$ and $J \subseteq Y$ finite. 
  As both $(p_I)$ and $(q_J)$ converge to $1$, we have $p_I q_J \to 1^2 = 1$.
  But each $p_I q_J = \sum_I \sum_J p_x q_y = 0$, so we have $1 = \mathop{\mathrm{lim}} p_I q_J = 0$. 
  Thus $M = 0$.
\end{proof}

Just as in Example~\ref{ex:calkin}, if there is a morphism $B(H) \to A$ in $\Cstar$ with
$H$ an infinite-dimensional Hilbert space, then every unbounded $\cat{TAlg}$-discretization of
$A$ is trivial.

\begin{remark}
  Similar to the C*-discretization in Proposition~\ref{prop:pushoutdiscretization}, one could construct a pro-C*-discretization by replacing the pushouts
  $A *_C \linfty(\Spec(C))$ in $\Cstar$ with the pushouts $A *_C \C^{\Spec(C)}$ in
  $\proCstar$. However, the previous corollary shows that this construction must trivialize for algebras $A$ that have relatively diffuse commutative C*-subalgebras.
\end{remark}

We close with one further example of a separable algebra having no injective W*-discretizations.
We only sketch its proof, as the complete argument would require us to modify several results
above to account for possibly nonunital commutative subalgebras, a technicality that we have
avoided for the sake of readability.

\begin{example}
  Let $H = L^2[0,1]$ and $C=C[0,1] \subseteq \Linfty[0,1] \subseteq B(H)$. 
  Then $A=C+K(H)$ is a separable C*-algebra of type~I for which every AW*-discretization and 
  every unbounded $\TAlg$-discretizations has nonzero kernel.
  (It does, however, have nonzero non-injective such discretizations that factor through the 
  commutative C*-algebra $A/K(H)$.)
\end{example}
\begin{proof}
  Let $e_n$, and $q_n$ be as in Example~\ref{ex:kadisonsinger}. Within $B(H)$, write $C_0(\Z) \simeq D \subseteq K(H)$ 
  for the nonunital commutative C*-subalgebra generated by the $q_n$.
  If one alters Definition~\ref{def:relativelydiffuse} to allow for possibly nonunital C*-sub\-algebras, then $C$ and $D$ are relatively diffuse. Indeed, each pure state $\psi_0$ on $D$ is supported on some projection $p = q_n$, and every extension of $\psi_0$ to a state $\psi$ on $A$ satisfies $\psi(f) = \psi(pfp) = (\int_0^1 f \, dt) \psi(p) = \int_0^1 f \, dt$ for all $f \in C[0,1]$.
  A suitable modification of Theorem~\ref{thm:nogo} holds for such $C$ and $D$,
  with hardly a change to the proof. 
  
  Now if  $\phi \colon A \to M$ is an AW*-discretization or an unbounded 
  $\TAlg$-discre\-ti\-zation, then we claim that $K(H) \subseteq \ker(\phi)$. Indeed, the
  same method of proof of Corollaries~\ref{cor:awstarnogo} and~\ref{cor:topologicalnogo} 
  shows that $D$ is contained in $\ker(\phi)$ (noting that $C$ is still a unital
  subalgebra), and $K(H)$ is the ideal generated by $D$.
\end{proof}

\section{Conclusion}
\label{sec:conclusion}

In contrast to the obstructions~\cite{reyes:obstructing, bergheunen:extending, benzvimareyes:kochenspecker}, based on the Kochen-Specker theorem~\cite{kochenspecker:hiddenvariables} from quantum physics, the fact that profinite completion faithfully discretizes all finite-dimensional C*-algebras shows that the results in Section~\ref{sec:nogo} are truly infinite-dimensional obstructions and are therefore
independent of the Kochen-Specker theorem.

From the perspective of discretization as discussed in this paper, the search for a suitable 
candidate $\Alg$ for a category of algebras dual to `noncommutative sets' remains open. 
Having ruled out various candidates, we now briefly discuss the implications, including possible avenues to avoid these obstructions. 

Within the category $\Cstar$, there remains the interesting open Question~\ref{question} of whether 
every C*-algebra has a functorial (or equivalently, compatible) C*-discretization that is injective or faithful.
This question is addressed in recent work of Kornell~\cite{kornell:vstar} that takes a radically different approach: passing to a model of set theory in which every subset of $\mathbb{R}$ is measurable, so that the Axiom of Choice fails. 

A positive answer to Question~\ref{question} would still not entail a candidate category of algebras dual to `noncommutative sets'. That would require isolating a suitable subcategory $\Alg$ of $\Cstar$ containing the algebras $\linfty(X)$ and their normal $*$-homomorphisms as a full subcategory (dual to `classical' sets). 
One of the most notable feature of the algebras $\linfty(X)$ and $\C^X$ is their abundance of projections. But using this structure as a guide makes Corollaries~\ref{cor:awstarnogo} and~\ref{cor:topologicalnogo} particularly troubling.
Suppose that $A$, $C(X)$, and $C(Y)$ are as in Theorem~\ref{thm:nogo}.
Let $\phi \colon A \to M$ be the discretization of Proposition~\ref{prop:pushoutdiscretization}.
On the one hand, that proposition demonstrates that $\linfty(X)$ and $\linfty(Y)$ embed faithfully into $M$.
On the other hand, for all $x \in X$ and $y \in Y$, Theorem~\ref{thm:nogo} implies that the images of 
$\delta_x \in \linfty(X)$ and $\delta_y \in \linfty(Y)$ are orthogonal in $M$. 
So it is not contradictory to faithfully embed both $\linfty(X)$ and $\linfty(Y)$ into a common discretization making all $\delta_x \delta_y$ vanish.

Thus Corollaries~\ref{cor:awstarnogo} and~\ref{cor:topologicalnogo} merely indicate that globally `gluing' projections via the structure of an AW*-algebra or via convergence of nets of finite sums is inadequate for discretization.
This suggests exploring new structures imposing a suitable `global coherence' on projections in noncommutative $*$-algebras beyond AW*-algebras or topological algebras.
To speculate only about a single possibility: the notion of contramodule~\cite{positselski:contramodules} formalizes `infinite summation' operations that cannot be interpreted as convergence of sums in any topology.

\section*{Acknowledgements}
  C.~Heunen was supported by EPSRC Fellowship EP/L002388/1. 
  M.~Reyes was supported by NSF grant DMS-1407152. 
  We thank the anonymous referee for an insightful report that sharpened the statement of
  Theorem~\ref{thm:nogo}, informed us of Example~\ref{ex:circle}, and inspired us to significantly
  expand and rewrite this paper. 
%\end{acknowledgements}

\bibliographystyle{plain}
\bibliography{discretization-arxiv-v2}

\end{document}